\documentclass[12pt]{article}
\usepackage[utf8]{inputenc}
\usepackage[a4paper,top=2.5cm,bottom=3cm,left=2.5cm,right=2.5cm,marginparwidth=2cm]{geometry}
\usepackage[hidelinks]{hyperref}
\usepackage{amsmath, amssymb, amsthm, bbm, graphicx, subcaption, tikz, subcaption, siunitx, enumerate, cleveref}
\usepackage[numbers,sort&compress]{natbib}

\renewcommand{\i}{\mathrm{i}} 
\newcommand{\Z}{\mathbb{Z}}
\newcommand{\R}{\mathbb{R}}

\newtheorem{thm}{Theorem}
\newtheorem{prop}[thm]{Proposition}

\theoremstyle{definition}

\numberwithin{equation}{section}
\numberwithin{thm}{section}

\title{Limit cycles govern wave energy accumulation in space–time-varying materials}
\author{Bryn Davies\thanks{Mathematics Institute, Zeeman Building, University of Warwick, Coventry CV4 7AL, UK} \and Erik Orvehed Hiltunen\thanks{$^1$Department of Mathematics, University of Oslo, Moltke Moes vei 35, 0851 Oslo, Norway}}
\date{}

\usepackage[textsize=scriptsize]{todonotes}

\begin{document}
	\maketitle

	\begin{abstract}
        We study the amplification and concentration of wave energy in smooth, periodic space–time media through the dynamics of the characteristic curves of the associated wave equation. In an impedance-matched setting, we show that stable limit cycles attract neighbouring characteristics, focusing the wave energy density. The exponential attraction rate is estimated analytically from the system's Lyapunov exponent. We further show that the attraction of characteristics and energy accumulation persist under a range of nonperiodic perturbations. Finally, we present a Floquet–Fourier numerical method for computing stable and unstable limit cycles and their attraction rates. Throughout, we exemplify the results using the example of modulation given by a superposition of travelling waves, revealing saddle--node bifurcations where limit cycles may appear depending on the relative wave speeds.  Full-wave simulations confirm that the characteristic mechanism continues to describe energy accumulation beyond the impedance-matched case.
	\end{abstract}
	
	\textbf{Keywords:} limit cycles; space–time materials; moving gratings; black holes; energy blowup; parametric amplification; characteristic curves; Floquet theory; Lyapunov exponents
		
	\section{Introduction}
    The study of wave propagation in time-modulated materials has seen a wealth of recent interest. Giving engineers the ability to manipulate material properties in time as well as space significantly alters the design space and leads to exotic phenomena, such as non-reciprocal transmission \cite{oliner1961wave, simon1960action}, Fresnel drag \cite{huidobro2019fresnel}, exceptional points \cite{nikzamir2022achieve}, and topological localisation \cite{rechtsman2013photonic}. The time-varying nature of these systems breaks the time-translational symmetry, and hence the energy conservation, of the system. Because of this, time-periodic systems give rise to frequency conversion and parametric amplification \cite{cullen1958travelling,holberg1966parametric, koutserimpas2018parametric}. Nevertheless, the energy is supplied through changing the material parameters and can be tracked. As result, such systems  satisfy generalized energy conservation relations which describe the energy sources in terms of the time-varying refractive index \cite{hiltunen2026optical, hirosawa2009generalised, zhang2024conservation}.

    The broken energy conservation of space–time-periodic materials also leads to the existence of stable and unstable limit cycles \cite{horsley2023quantum, lurie2009mathematical}. These are characteristic curves of the associated wave equation which are periodic with respect to the lattice and, additionally, serve as attractors of neighbouring characteristics. As a result, waves in such systems are spontaneously focused, leading to accumulation of the energy density and a blowup of the total energy. In particular, this  energy accumulation may appear in impedance-matched systems: in such systems, the overall field remains bounded and the energy blowup is due to wave focusing rather than blowup of the wave field itself. The wave focusing due to stable limit cycles is a direct analogy of self-focusing seen in nonlinear wave systems \cite{moll2003self}, where the index of refraction depends on the wave field propagating through the material and effectively creates a time-varying medium. In space–time metamaterials, the time-varying index is instead created through external forcing, resulting in a linear problem with qualitatively similar response.

    Periodic solutions and stable limit cycles have been thoroughly studied in canonical examples. For the classical example of travelling-wave modulation, where parameters are modulated with a moving profile of the form $f(x-vt)$, the accumulation of characteristic curves can be studied by exploiting a co-moving coordinate system \cite{horsley2023quantum, horsley2024traveling}. There have also been studies of piecewise constant ``rectangular''/``checkerboard'' space–time materials, where the refractive index instantaneously jumps between specific values \cite{lurie2025mechanism, lurie2009mathematical, lurie2006wave, milton2017field}. In this case, the associated characteristic curves are piecewise linear. This enables explicit characterisation of the limit cycles and energy accumulation by integrating the equations on each piecewise constant domain.
    
    Since the piecewise constant case is well-understood, we instead focus on a smoothly varying refractive index in this work. As we shall see, such systems naturally lend themselves to analytic perturbation theory and computational Floquet–Fourier methods. We will study the attraction of characteristics to limit cycles, and use this to predict the energy accumulation phenomena. Our methods can handle a general class of modulation. To demonstrate the key ideas, we will focus on an example where the modulation is the superposition of two travelling waves at different speeds, i.e. the material parameters have the form $f(x-mvt)+g(x-nvt)$ for integers $m\neq n$.

    After introducing the model in \Cref{sec:first}, we begin by recalling the main results in the case of a travelling-wave modulation in \Cref{sec:impedance}. In this case, the characteristic equation can be reduced to an autonomous ODE by changing to a moving coordinate frame. As a result, the exponential rate $r$ of convergence of the characteristics can be explicitly computed, and we use this to exemplify the energy accumulation due to limit cycles. These results are subsequently generalised to general space–time-varying profiles of the refractive index in \Cref{sec:main}. Specifically, we use analytic perturbation theory to determine the rate $r$ of attraction of stable limit cycles. This result is used to prove energy accumulation using scaling arguments following \cite{lurie2009mathematical}. We exemplify the results in the case of a double travelling-wave modulation. This is a simple system that still undergoes multiple saddle--node-type bifurcations where stable limit cycles appear and disappear depending on the parameters of the problem.
    
    Having characterised the energy accumulation as a consequence of the convergent characteristics, we turn to the question of \emph{stability} with respect to perturbations. We show that a small, periodic perturbation preserves the stable limit cycles. Similarly, while a small non-periodic perturbation may break the periodicity of the limit cycles, we show that the energy accumulation property is preserved. In the case of random noise, there is a competition of mechanisms: the underlying limit cycles serves to focus the characteristics, while the randomness introduces a diffusive term for the probability distribution of the characteristics. Finally, in \Cref{sec:floquet}, we introduce a Floquet–Fourier computational method for computing stable and unstable limit cycles: using Fourier expansions, we rephrase the characteristic ODE as a nonlinear system of equations which is readily solved using iterative methods.  
	
	\section{Problem setting}\label{sec:first}
	We consider electromagnetic waves in a space–time-varying grating following the model appearing in \cite{horsley2024traveling}
    \begin{equation} \label{eq:wave}
		\frac{\partial}{\partial x} \left( \frac{1}{\mu} \frac{\partial u}{\partial x} \right) - \frac{1}{c_0^2}\frac{\partial}{\partial t} \left( \epsilon\frac{\partial u}{\partial t} \right)=0,
	\end{equation}
	where $c_0$ is the background speed of light. Here, both the permeability $\mu=\mu(x,t)$ and the permittivity $\epsilon=\epsilon(x,t)$ are allowed to vary as functions of space and time. In this framework, the electric and magnetic fields are defined by $u$ as  $\mathbf{E} = -(\partial_t u) \mathbf{e}_y$ and $\mathbf{B} = (\partial_x  u )\mathbf{e}_z$, respectively \cite{horsley2024traveling}. We are interested in parameters that are periodic in space–time, in the sense that there exist $L$ and $T$ such that
	\begin{align} \label{periodicity}
		\mu(x+L,t)=\mu(x,t) \quad\text{and}\quad \mu(x,t+T)=\mu(x,t),
	\end{align}
    for all $x$ and $t$, along with the same periodicity for $\epsilon$. This is the key assumption to obtain characteristic curves which are a periodic limit cycles. Throughout, we assume that $\mu(x,t)>0$ and  $\epsilon(x,t)>0$ for all $(x,t)$. As we will see, the emergence of limit cycles will cause energy accumulation and blowup. In the model under consideration, the energy density $\rho(x,t)$ is given by (see, for example, \cite{koutserimpas2020electromagnetic})
	\begin{equation}\label{eq:E}
		\rho(x,t) = \frac{\epsilon(x,t)}{2c_0^2} \left(\frac{\partial u}{\partial t}\right)^2 + \frac{1}{2\mu(x,t)} \left(\frac{\partial u}{\partial x}\right)^2,
	\end{equation}
	and the total energy of the solution $u$ at time $t$ as 
	\begin{equation}
		E(t) = \int_\R \rho(x,t) \mathrm{d}x.
	\end{equation}
    
	\section{Impedance-matched systems}\label{sec:impedance}
	A useful simplification is to assume that we have impedance matching, such that $\epsilon=\mu=n$, where $n=n(x,t)$ is the refractive index. Under this assumption, \eqref{eq:wave} can be factorised into two first-order equations
	\begin{equation} \label{matched_odes}
		\frac{c_0}{n(x,t)}\frac{\partial u}{\partial x} \pm \frac{\partial u}{\partial t}=0,
	\end{equation}
	where the plus and minus signs correspond to waves propagating to the right and to the left, respectively. In particular, this assumption decouples waves travelling in each direction, and is a common assumption made when studying energy accumulation in time-varying systems \cite{lurie2006wave,horsley2024traveling,pendry2021gain}. 	
	For impedance-matched systems, the energy density simplifies to 
	\begin{equation}\label{eq:E2}
		\rho(x,t) = \frac{1}{n(x,t)} \left(\frac{\partial u}{\partial x}\right)^2.
	\end{equation}
    Throughout, we consider rightward-propagating waves
	\begin{equation}\label{eq:rightward}
		\frac{c_0}{n(x,t)}\frac{\partial u}{\partial x} + \frac{\partial u}{\partial t}=0,
	\end{equation}
	and study properties of the characteristic curves $x(t)$, which are solutions to the (nonlinear) equation
	\begin{equation}
		\frac{\mathrm{d} x}{\mathrm{d}t} =  \frac{c_0}{n(x,t)}.
	\end{equation}
	
	\subsection{Travelling-wave modulation}\label{sec:travelling}
	We begin by illustrating the main ideas with the simplest and most widely studied case where the modulation is a fixed profile $f$ moving at a constant speed $v$:
	\begin{equation}
		\frac{c_0}{n(x,t)}=f(x-vt),
	\end{equation}
	for some $L$-periodic function $f(y)$. This case has previously been considered in a number of works, but we include it here as a simple, explicitly solvable model which illustrates the main features of energy accumulation due to stable limit cycles. The most interesting case is where the waves are moving in the same direction as the modulation, i.e. when $v>0$. We can make a change of variables
	\begin{equation}
		y=x-vt,
	\end{equation}
    so that the variable $y$ is the spatial coordinate with respect to a reference frame that moves along with the modulation. This leads to
	\begin{equation}\label{eq:char_travelling}
		\frac{\mathrm{d}y}{\mathrm{d}t}=f(y)-v.
	\end{equation}
	Solutions of \eqref{eq:char_travelling} are straight lines $y(t) = y_0$ for all $y_0$ such that $f(y_0)=v$, and such equilibria are stable whenever $f'(y_0) < 0$. When $f(y)\neq v$, eq. \eqref{eq:char_travelling} has general solution satisfying
	\begin{equation} \label{charac_integral}
		\int\frac{1}{f(y)-v}\mathrm{d}y=t+C.
	\end{equation}
    This integral can be handled in some simple cases and for the remainder of this introductory section, we suppose that $f(y)=2+\cos(y)$. At the points $v=1$ and $v=3$, the problem has saddle--node bifurcations where (modulo $L$) one stable and one unstable limit cycle emerges for $v\in (1,3)$. In this region, we have \cite[eq. \textbf{2.551} 3]{gradshteyn2014table}
	\begin{equation}
		\int\frac{1}{2-v + \cos(y)}\mathrm{d}y = \frac{1}{\sqrt{1-(2-v)^2}}\log \frac{(2-v)\tan\frac{y+\pi/2}{2} + 1 -\sqrt{1-(2-v)^2}}{(2-v)\tan\frac{y+\pi/2}{2} + 1 +\sqrt{1-(2-v)^2}}.
	\end{equation}
	This gives solutions 	
	\begin{equation}
		y(t)= -\frac{\pi}{2}+2\arctan\left(\frac{r+1 + (r -1)\exp(-rt+C) }{(v-2)\bigl(1- \exp(-rt+C)\bigr)}\right),
	\end{equation}
	where $r = \sqrt{1-(2-v)^2}$. As $t\to \infty$, we have  
	\begin{equation}
		\lim_{t\to \infty} y(t) =   -\frac{\pi}{2}+2\arctan\left(\frac{1 +r}{v-2}\right) = \arccos\left(v-2\right),
	\end{equation}
	in other words, the characteristic $y(t)$ approaches the stable limit cycle $y_0$ such that $f(y_0) = v$. Moreover, $y(t)$ converges to $y_0$ at an exponential rate given by $r$:
	\begin{equation}\label{eq:r_travelling}
		\bigl|y(t) - y_0\bigr| = O\bigl(\exp(-rt)\bigr).
	\end{equation}
    We highlight that $r$ can alternatively be written $r=f'(y_0)$. The concentration of characteristics is shown in \Cref{fig:travelling}, and we observe the predicted exponential rate $r$ of convergence in \Cref{fig:error_travelling}.
    
    Next, we will illustrate how the attractive limit cycle gives rise to energy accumulation. For initial conditions $y(0) = y_0 + \epsilon$ close to $y_0$, the characteristics are given by 
    \begin{equation}
    y(t) = y_0 + \epsilon\exp(-rt) + O\bigl(\epsilon^2\exp(-2rt) \bigr).
    \end{equation}
    For the purpose of illustration, we neglect the error term and define $p(x,t) = y_0 + (x-vt-y_0)\exp(rt)$. In this case, the solution to equation \eqref{eq:rightward} satisfying $u(x,0) = u_0(x)$ is given by 
    \begin{equation}
        u(x,t) = u_0\bigl(p(x,t)\bigr),
    \end{equation}
    and it directly follows that the energy density $\rho$ and total energy $E$ satisfy the scaling
    \begin{equation}\label{eq:accumuation_travelling}
    \rho(x,t) = \exp(2rt)\rho\bigl(p(x,t),0\bigr), \quad E(t) = \exp(rt)E(0).
    \end{equation}
    In other words, the initial wave is focused so that the energy density accumulates along the limit cycle and the total energy diverges. This is illustrated in \Cref{fig:energy_travelling}, where we plot the energy density when the travelling-wave-modulated system from \Cref{fig:travelling} is initialised by a Gaussian pulse $u(x,0) = \exp(-x^2/0.4L)$. As shown in \eqref{eq:accumuation_travelling}, the energy accumulates along the limit cycles.
    
	\begin{figure}[tbh]
		\begin{subfigure}[t]{0.33\linewidth}
			\centering
			\includegraphics[width=\linewidth]{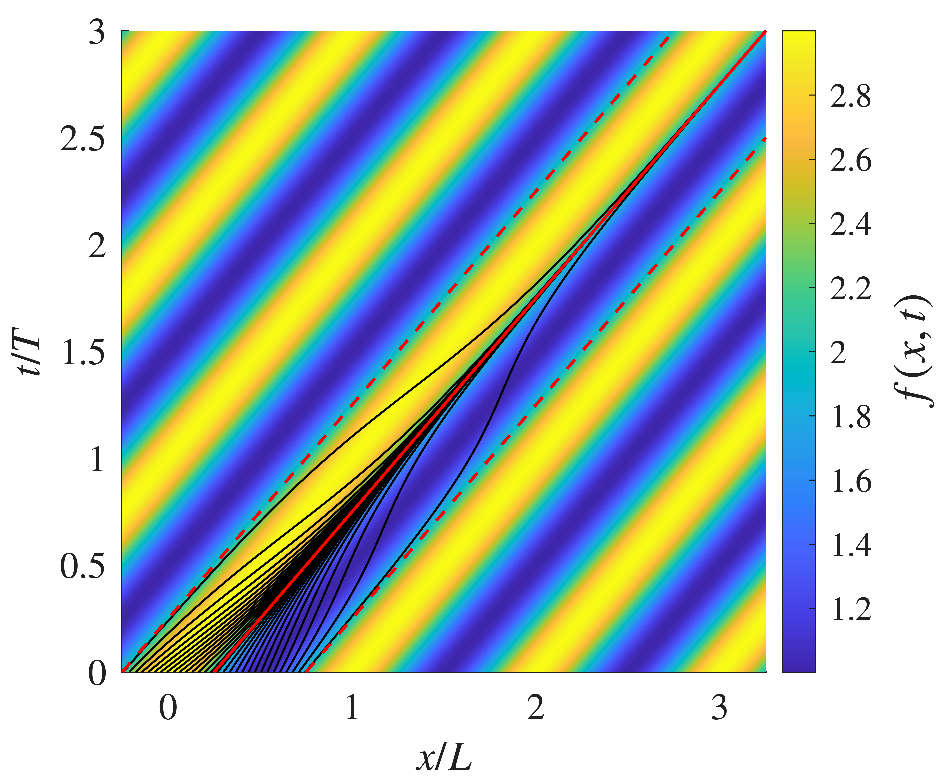}
			\caption{Travelling-wave modulation.}\label{fig:travelling}
		\end{subfigure}\hfill
		\begin{subfigure}[t]{0.33\linewidth}
			\centering
			\includegraphics[width=\linewidth]{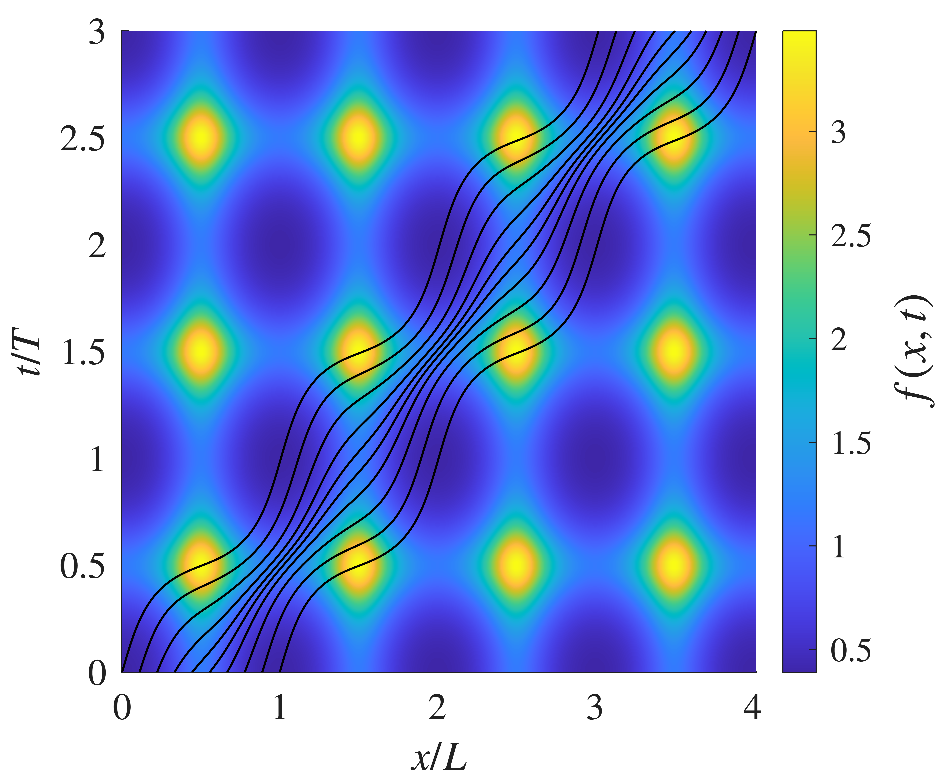}
			\caption{Separable modulation} \label{fig:separable}
		\end{subfigure}\hfill
		\begin{subfigure}[t]{0.33\linewidth}
			\centering
			\includegraphics[width=\linewidth]{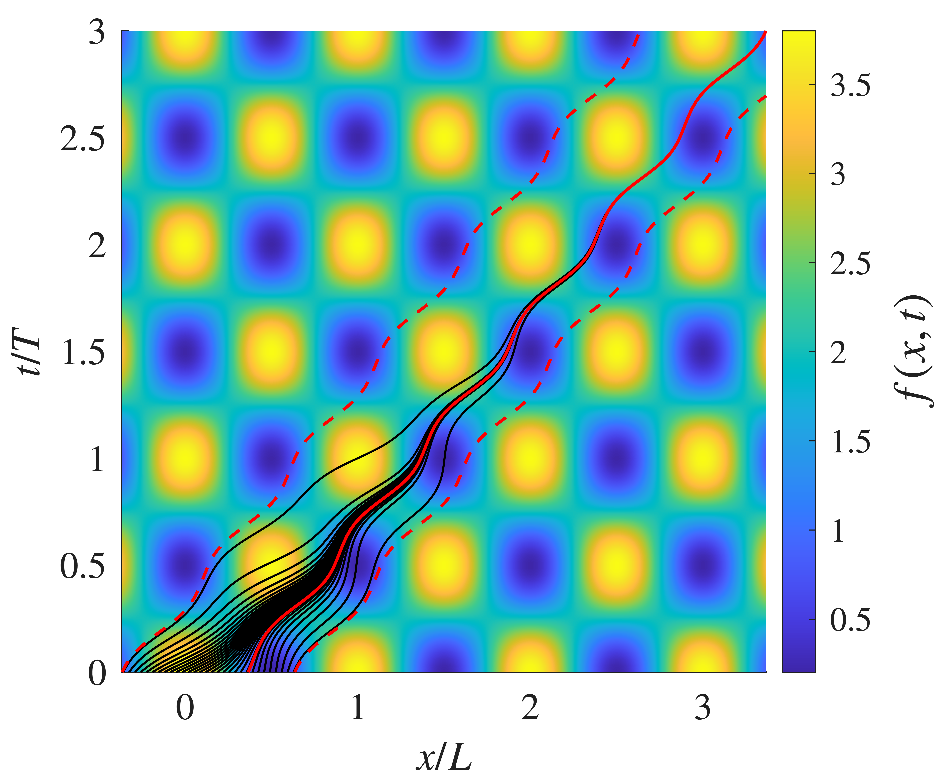}
			\caption{Standing-wave modulation} \label{fig:standing}
		\end{subfigure}
        \caption{When the refractive index is given by a travelling wave (a) or standing wave (c), the characteristics diverge from the unstable limit cycles (dashed lines) and tend to the stable limit cycle (bold line) with exponential rate $r$. In the case of separable modulation, shown in (b) there cannot be attractive limit cycles, although characteristics may still be periodic with respect to the lattice.}
	\end{figure}

	\begin{figure}[tbh]
		\begin{minipage}[t]{0.48\linewidth}
			\centering
			\includegraphics[width=\linewidth]{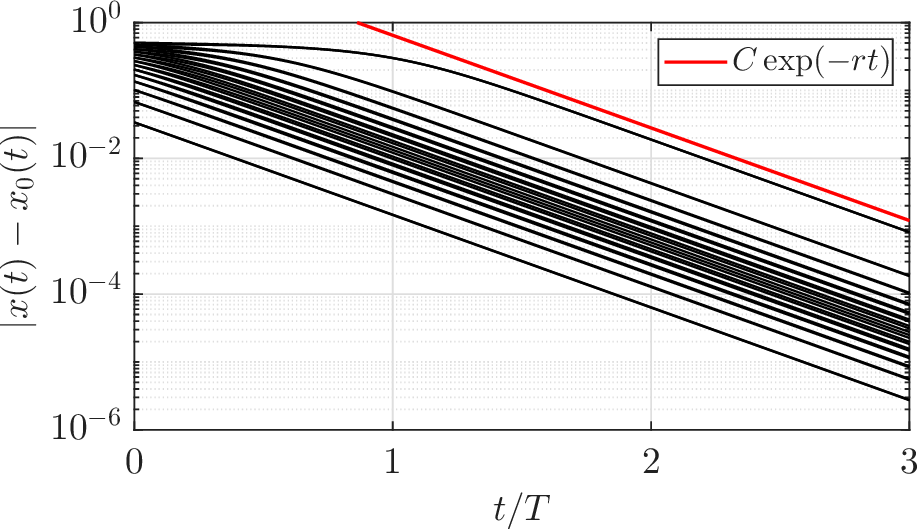}
			\caption{We plot the convergence of the characteristics from \Cref{fig:travelling} to the corresponding limit cycle and observe the predicted exponential rate $r$ of convergence.}\label{fig:error_travelling}
		\end{minipage}\hfill
		\begin{minipage}[t]{0.48\linewidth}
			\centering
			\includegraphics[width=\linewidth]{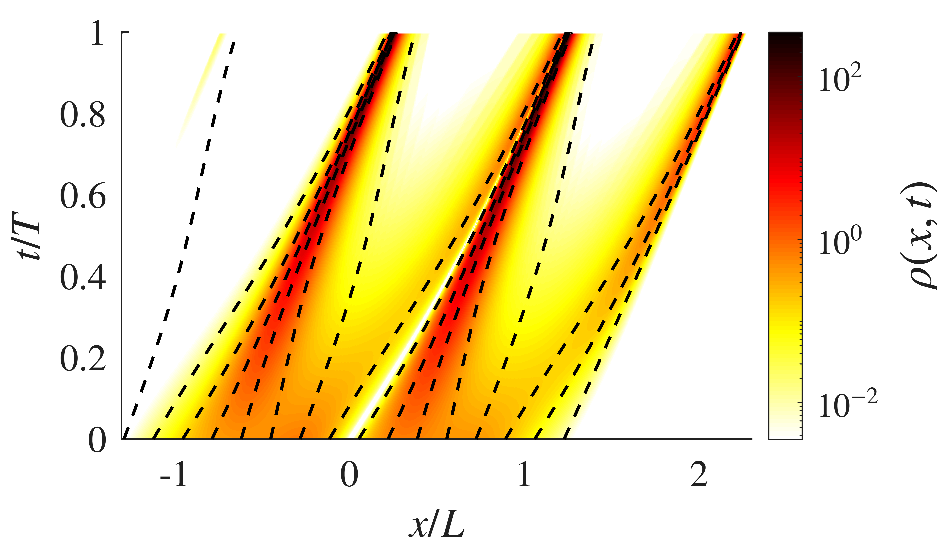}
			\caption{When the system from \Cref{fig:travelling} is initialised by a pulse, the convergent characteristics cause energy accumulation along the limit cycles.} \label{fig:energy_travelling}
		\end{minipage}
		
	\end{figure}
	
	\subsection{Separable modulation}
	If $n(x,t)$ is separable, in the sense that it can be written as the product of independent functions of space and time,
	\begin{equation}
		\frac{c_0}{n(x,t)}=f(x)g(t),
	\end{equation}
	there cannot be attractive characteristics of the wave equation. To see why, we consider the characteristic equation 
	\begin{equation}
		\frac{\mathrm{d} x}{\mathrm{d}t} = f(x)g(t).
	\end{equation}
	This can be solved by introducing the antiderivative $G(t) = \int_0^t g(\tau)\mathrm{d}\tau$. Since $g(t)>0$, we know that $G(t)$ is strictly increasing, so that $t = G^{-1}(\tau)$ is a well-defined change of coordinates that transforms the characteristic equation into 
	\begin{equation} \label{characs:separable}
		\frac{\mathrm{d} x}{\mathrm{d}\tau} = f(x).
	\end{equation}
	We now have an autonomous ODE with $f(x)\neq 0$, and there cannot be attractive limit cycles. An example of this is shown in Figure~\ref{fig:separable}, where we show the characteristics for the case where $f(x)=3.5/(2+\cos(2\pi x))$ and $g(t)=1/(2+\cos(2\pi t))$. Even though the characteristics are non-attractive, we highlight that they could be periodic with respect to the lattice $L\Z\times T\Z$.

    We highlight that a standing-wave modulation of the form
    \begin{equation}
		\frac{c_0}{n(x,t)}=C + f(x)g(t),
	\end{equation}
    may still support stable limit cycles, as studied in \cite{horsley2024traveling}. In \Cref{fig:standing}, we consider the case
    \begin{equation}\label{eq:standing}
		\frac{c_0}{n(x,t)}= 2+1.8\cos\left(\frac{2\pi x}{L}\right)\cos\left(\frac{2\pi t}{T}\right) 
    \end{equation}
    and observe stable and unstable limit cycles as in \Cref{fig:travelling}.

	\section{Limit cycle characteristics}\label{sec:main}
	We now consider a general space–time-varying refractive index $n(x,t)$, and aim to study the rate of attraction, and associated energy accumulation, of characteristic curves. The goal is to formulate a general framework for the behaviour illustrated in for travelling-wave modulations in \Cref{sec:travelling}. We define the instantaneous wave speed $f(x,t)$ as 
	\begin{equation}
		f(x,t) = \frac{c_0}{n(x,t)}.
	\end{equation}
	The characteristic curve $x(t)$ through $(a,0)$ is given by the solution to the (nonlinear) initial-value problem
	\begin{equation}\label{eq:char}
		\begin{cases}
			\displaystyle \frac{\mathrm{d} x}{\mathrm{d}t} = f(x,t), \\
			x(0) = a.
		\end{cases}
	\end{equation}
	Note that $f(x,t)$ is periodic in both $x$ and $t$; nevertheless, the solutions are not required to be periodic. Following \cite{lurie2009mathematical}, a solution $x(t)$ satisfying $x(t+qT) = x(t) + pL,$ for some integers $p,q\in \Z$ will be called a $(p,q)$-periodic \emph{limit cycle}. In other words, a  limit cycle is a characteristic curve which repeats periodically across  $p$ periods in $x$ and $q$ periods in $t$. 
	
	The solution to \eqref{eq:char} can be written implicitly as
	\begin{equation}
		x(t) = a+\int_0^t f(x(\tau),\tau)\mathrm{d}\tau.
	\end{equation}
	The condition for a limit cycle then reads
	\begin{equation}\label{eq:average}
		\frac{1}{qT}\int_0^{qT}f(x(t),t)\mathrm{d} t = \frac{pL}{qT}.
	\end{equation}
	In other words, the average instantaneous wave speed along the limit cycle must equal a rational multiple of the effective grating velocity $v= L/T$. Equivalently, we can define a moving coordinate frame $y(t) = x(t) - t\frac{pL}{qT}$, so that $y$ is a periodic solution to the problem
	\begin{equation}\label{eq:char_y}
		\begin{cases}
			\displaystyle \frac{\mathrm{d} y}{\mathrm{d}t} = f\bigl(y+t\tfrac{pL}{qT},t\bigr) - \frac{pL}{qT}, \\
			y(0) = a.
		\end{cases}
	\end{equation}
	Although most frequently seen with periodicity $(p,q)=(1,1)$, we  emphasise that limit cycles can appear with general periodicities $(p,q)$, as shown in \Cref{fig:doublewave}. Here, we consider a sum of two travelling wave modulations:
	\begin{equation}\label{eq:doublewave}
		f(x,t) = 2 + \cos\left(\frac{2\pi}{L}(x-vt)\right) + \frac{1}{2} \cos\left(\frac{2\pi}{L}(x-2vt)\right).
	\end{equation}
	Depending on the speed $v$ of the travelling waves, limit cycles may appear at different periodicities. Here, the limit cycles are computed using a Floquet–Fourier method (outlined in \Cref{sec:floquet}), while the remaining characteristic curves are computed by a direct numerical integration of \eqref{eq:char}. At the bifurcation point, the unstable (dashed lines) and stable (bold lines) limit cycles coincide in a one-sided unstable limit cycle in a saddle--node-type bifurcation. This transition is further highlighted in \Cref{fig:zoom}, where the projection of the limit cycles are plotted over the torus $[0,L]\times[0,T]$.
	
	\begin{figure}[tbh]
		\begin{subfigure}[t]{0.33\linewidth}
			\centering
			\includegraphics[width=\linewidth]{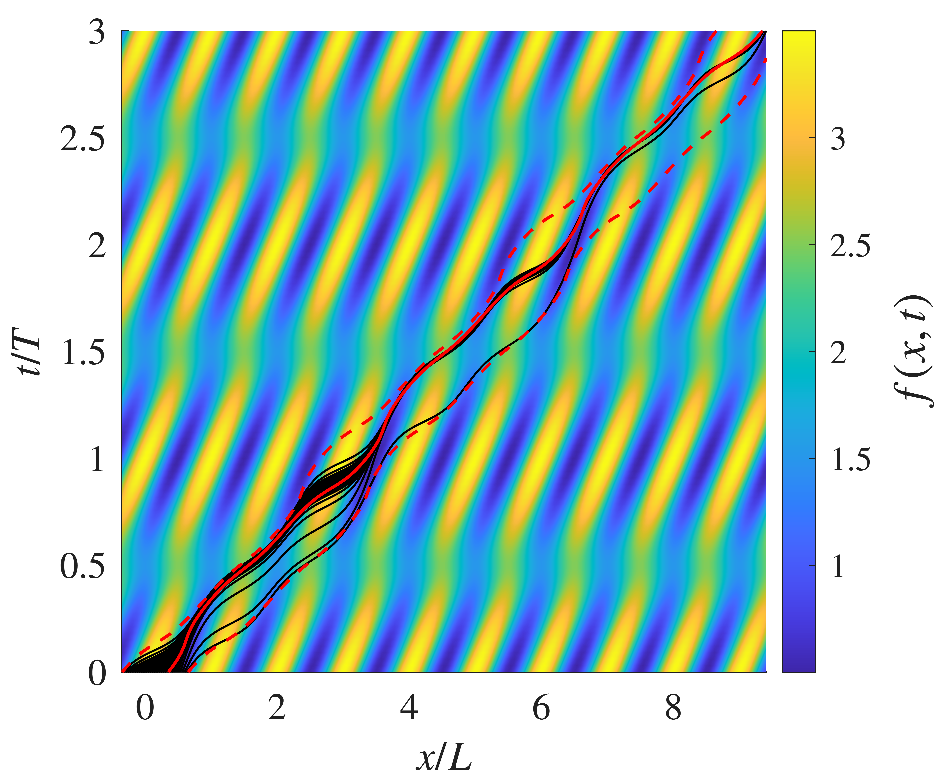}
			\caption{$v=0.5$}
		\end{subfigure}\hfill
		\begin{subfigure}[t]{0.33\linewidth}
			\centering
			\includegraphics[width=\linewidth]{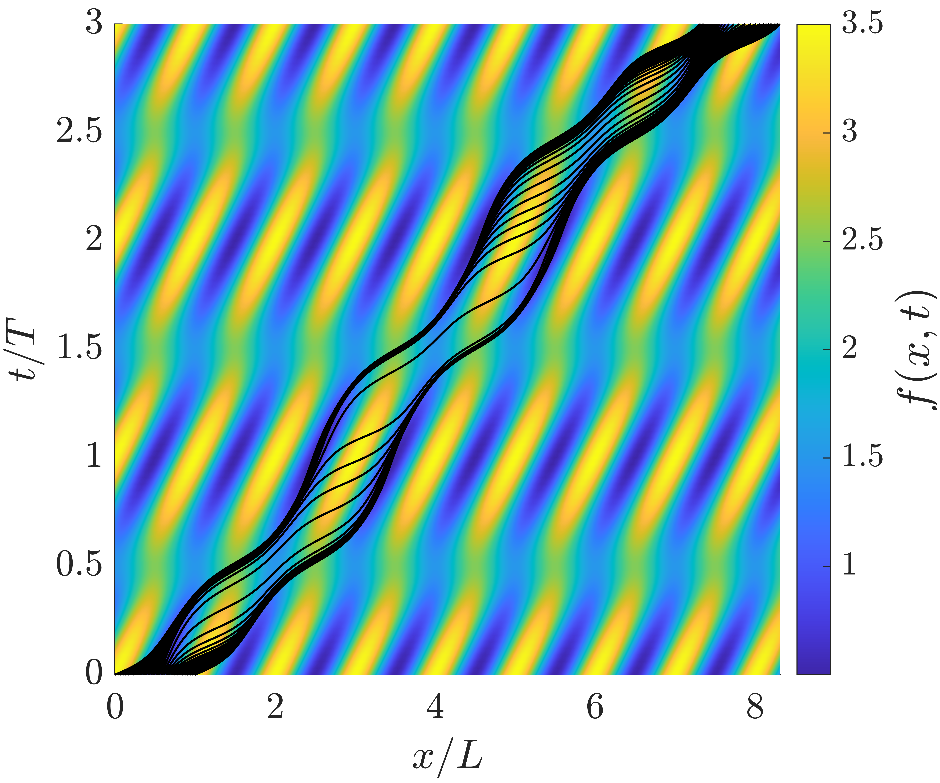}
			\caption{$v=0.54$}
		\end{subfigure}\hfill
		\begin{subfigure}[t]{0.33\linewidth}
			\centering
			\includegraphics[width=\linewidth]{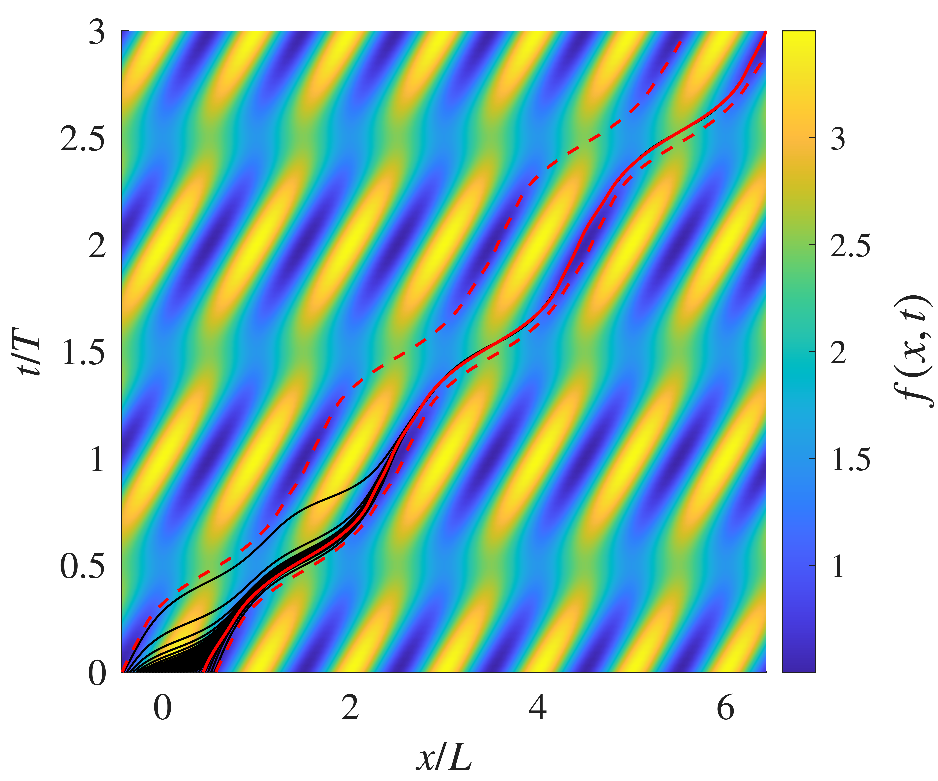}
			\caption{$v=0.58$}
		\end{subfigure}
		\caption{Transition between limit cycles with a sum of two travelling wave modulations $f(x,t) =  2 + \cos(2\pi/L(x-vt)) + 0.5\cos(2\pi/L(x-2vt))$. The limit cycles in the left panel (a) satisfy $p=3, q=1$ while those in the right panel (c) satisfy $p=2, q=1$. In the intermediate panel, no limit cycles appear.}\label{fig:doublewave}
		
		\vspace{0.3cm}
		
		\begin{subfigure}[t]{0.33\linewidth}
			\centering
			\includegraphics[width=\linewidth]{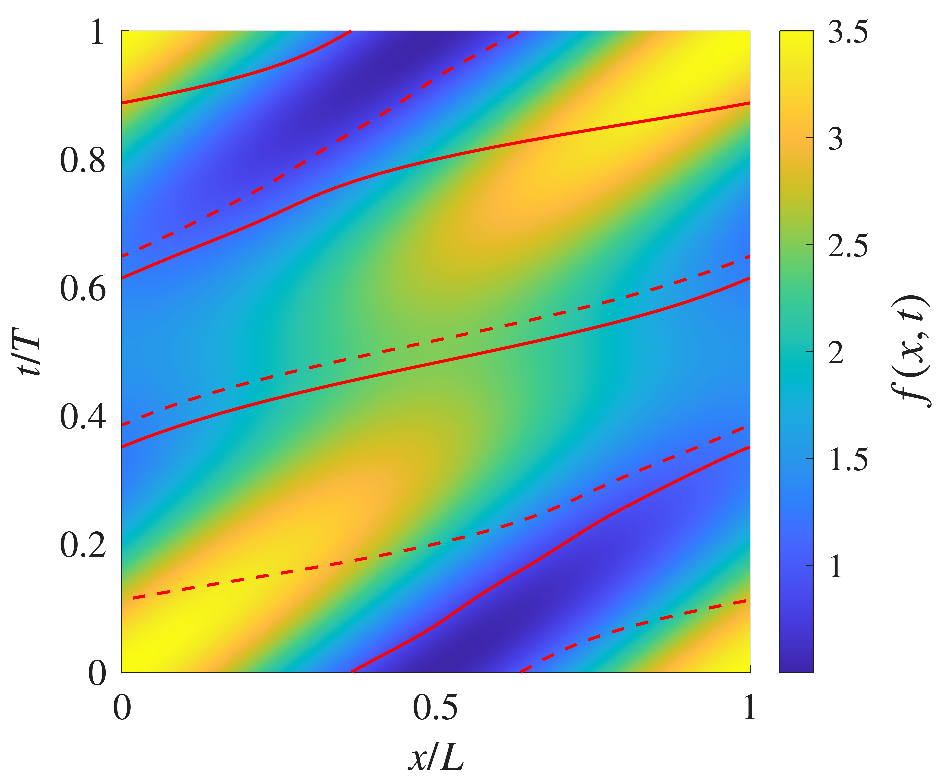}
			\caption{$v=0.5$}
		\end{subfigure}\hfill
		\begin{subfigure}[t]{0.33\linewidth}
			\centering
			\includegraphics[width=\linewidth]{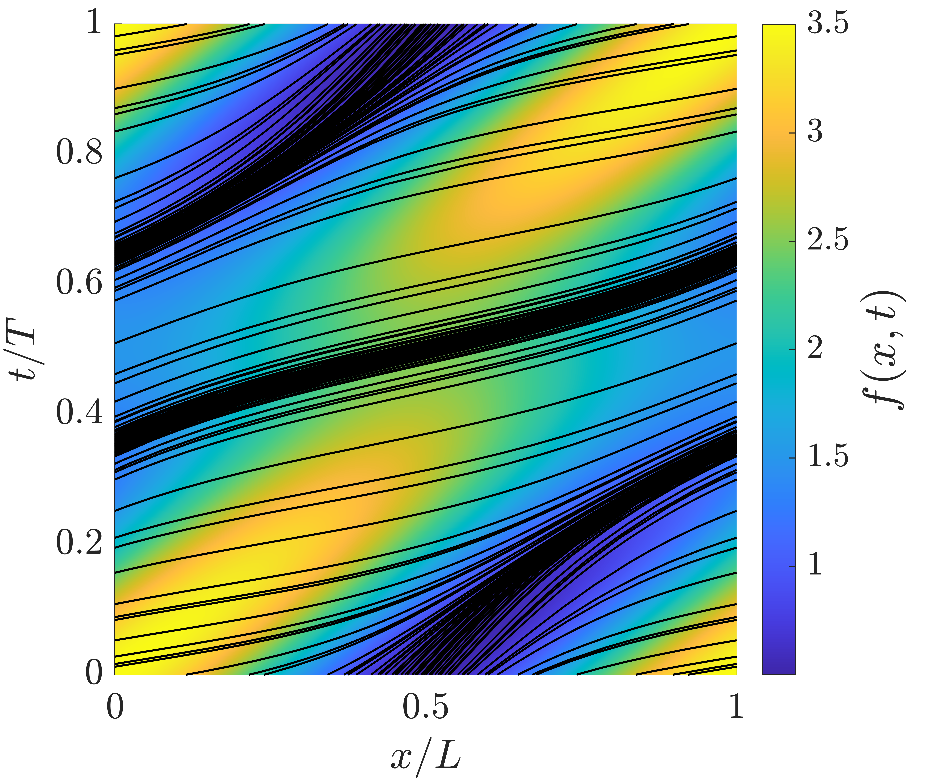}
			\caption{$v=0.54$}
		\end{subfigure}\hfill
		\begin{subfigure}[t]{0.33\linewidth}
			\centering
			\includegraphics[width=\linewidth]{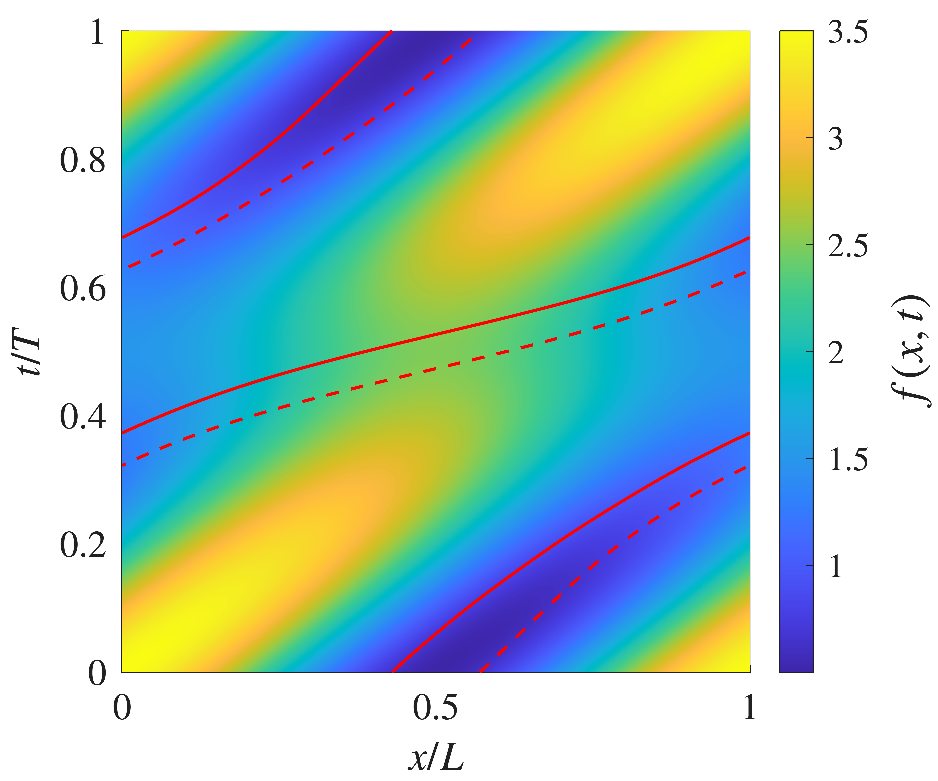}
			\caption{$v=0.58$}
		\end{subfigure}
		\caption{We project the limit cycles from \Cref{fig:doublewave} to the torus $[0,L]\times[0,T]$: stable limit cycles (bold lines) and unstable limit cycles (dashed). In (b), where there are no limit cycles, the curve does not repeat and a single characteristic curve instead densely fills the torus.}\label{fig:zoom}
	\end{figure}
	
	\subsection{Attractiveness of limit cycles}\label{sec:attraction}
	Using analytic perturbation theory, we can estimate the rate at which neighbouring characteristics approach an attractive limit cycle and generalise \eqref{eq:r_travelling}. We consider the problem \eqref{eq:char_y} and assume we have a periodic solution $y_0(t)$ with initial conditions $y_0(0) = a_0$.
	\begin{prop}\label{prop:asymptotic}
		Assume $x_0(t)$ is a $(p,q)$-periodic limit cycle of an analytic, $(L,T)$-periodic function $f(x,t)$ satisfying $x_0(0) = a_0$. Assume that 
		\begin{equation}\label{eq:stability}
			-\frac{1}{qT}\int_0^{qT} f_x(x_0(t),t)\mathrm{d} t := r > 0.\end{equation}
		Given a constant $a_1$ and for $0<\epsilon\ll1$, let $a_\epsilon = a_0 + a_1\epsilon$ and let $x_\epsilon(t)$ be the solution of \eqref{eq:char} satisfying $x_\epsilon(0) = a_\epsilon$. Then, for small enough $\epsilon$, this solution satisfies
		\begin{equation}\label{eq:attractive}
			\bigl|x_\epsilon(t) - x_0(t)\bigr| = O\bigl(\epsilon\exp(-rt)\bigr),
		\end{equation}
		as $t\to \infty$.
	\end{prop}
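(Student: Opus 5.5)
We linearise \eqref{eq:char} about the limit cycle $x_0$ and treat $x_\epsilon$ as an analytic perturbation in $\epsilon$. Because $f$ is analytic, the solution of \eqref{eq:char} depends analytically on its initial datum. On any finite time interval we can therefore write
\begin{equation*}
x_\epsilon(t) = x_0(t) + \epsilon x_1(t) + \epsilon^2 x_2(t) + \dots .
\end{equation*}
Substituting this into \eqref{eq:char} and collecting the $O(\epsilon)$ terms gives the variational equation
\begin{equation*}
\frac{\mathrm{d} x_1}{\mathrm{d} t} = f_x\bigl(x_0(t),t\bigr)\, x_1, \qquad x_1(0)=a_1 .
\end{equation*}
This equation is linear with $qT$-periodic coefficient $g(t):=f_x(x_0(t),t)$. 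Periodicity holds because $x_0(t+qT)=x_0(t)+pL$ and $f$ is $(L,T)$-periodic. The explicit solution is $x_1(t)=a_1\exp\bigl(\int_0^t g(\tau)\,\mathrm{d}\tau\bigr)$.

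We write $\int_0^t g = -rt + h(t)$. By \eqref{eq:stability}, $h$ is $qT$-periodic and hence bounded by some constant $H$. This gives $|x_1(t)|\le |a_1|e^{H}e^{-rt}$, which is exactly the Floquet multiplier statement $x_1(t+qT)=e^{-rqT}x_1(t)$. The first-order term therefore already has the claimed form $O(\epsilon e^{-rt})$. The remaining task is to control the higher-order terms uniformly in $t$, not merely on compact time intervals.

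This uniform control is the main obstacle, since a naive analytic expansion on $[0,\infty)$ could accumulate secular errors. We avoid this by using the period map rather than the expansion directly. Let $P(a)=x(qT;a)-pL$ be the Poincaré map on initial data. It is analytic, satisfies $P(a_0)=a_0$, and has $P'(a_0)=x_1(qT)/a_1=e^{-rqT}<1$. Set $\lambda := e^{-rqT}$. By Taylor's theorem,
\begin{equation*}
|P(a)-a_0|\le \lambda|a-a_0|\bigl(1+C|a-a_0|\bigr)
\end{equation*}
on a neighbourhood $|a-a_0|\le\delta$. Iterating gives $\delta_{k+1}\le\lambda\delta_k(1+C\delta_k)$ with $\delta_0=|a_1|\epsilon$. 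We then show that $\prod_k(1+C\delta_k)$ is bounded uniformly. Once $\delta_k\le 2\lambda^k\delta_0$ is known, the sum $\sum_k C\delta_k \le 2C\delta_0/(1-\lambda)$ is small, and so the bound $\delta_k\le 2\lambda^k\delta_0$ is recovered for small $\epsilon$. A bootstrap/induction argument therefore gives $\delta_k\le K\epsilon\lambda^k$ for all $k$.

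Between the sampling times $t_k = kqT$, we compare $x_\epsilon$ and $x_0$ using Grönwall on a single period. Since $|f_x|\le M$, we get $|x_\epsilon(t)-x_0(t)|\le e^{MqT}\delta_k$ for $t\in[kqT,(k+1)qT]$. Translation invariance of the equation under the lattice $pL$, $qT$ makes this constant independent of $k$. Combining the two estimates yields $|x_\epsilon(t)-x_0(t)|\le K'\epsilon e^{-rt}$ for all $t\ge0$, which is \eqref{eq:attractive}.

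One could alternatively aim for the sharper asymptotic $x_\epsilon-x_0 = \epsilon x_1 + O(\epsilon^2e^{-rt})$. This would follow by writing the difference equation $\dot w = g(t)w + R(t,w)$ with $|R|\le C w^2$ and applying variation of constants with the integrating factor $e^{-\int g}$. This route reduces to the same bootstrap.
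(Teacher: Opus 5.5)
Your proof is correct, and it takes a genuinely different route from the paper.

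\textbf{What you do differently.} The paper expands $y_\epsilon=\sum_n\epsilon^n y_n$ on all of $[0,\infty)$ and bounds every coefficient $y_n$ inductively, using variation of constants with the integrating factor $\exp(F(t))$. You use the first-order variational equation only to compute the multiplier $P'(a_0)=\exp(-rqT)$ of the return map $P(a)=x(qT;a)-pL$. You then control the full nonlinear flow in three steps:
\begin{enumerate}
\item the lattice invariance $x(t+qT;a)-pL=x(t;P(a))$ reduces the sampled distances to $\delta_k=|P^k(a_\epsilon)-a_0|$;
\item a discrete bootstrap on $\delta_k$ gives $\delta_k\le 2\exp(-rkqT)|a_1|\epsilon$;
\item Gr\"onwall on a single period fills in the times between samples.
\end{enumerate}

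\textbf{What the paper's route buys, and what it leaves open.} If completed, the paper's approach yields an all-orders expansion with explicit coefficients, and it shares its machinery with \Cref{prop:asymptotic2}. As written, however, it leaves open exactly the point you flag. Analytic dependence on $\epsilon$ is only guaranteed on compact time intervals. No bounds on $y_n$ that are uniform in $n$ are given, so the series cannot be summed for all $t$.

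The paper's inductive claim $y_n=O(\exp(-rnt))$ is also too strong. With $g_n=O(\exp(-rnt))$, the integral $\int_0^t g_n\exp(-F)$ converges to a generically nonzero constant, so $y_n$ decays only like $\exp(-rt)$. For example, $\dot w=-w+w^2$ with $w(0)=\epsilon$ gives $y_2=e^{-t}(1-e^{-t})$. The conclusion \eqref{eq:attractive} survives this, but the remainder in \eqref{eq:attractive2} should be $O(\epsilon^2\exp(-rt))$, which is the form in your closing remark.

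\textbf{What your route buys.} It needs only $C^2$ regularity of $f$, not analyticity. It gives an explicit bound $2|a_1|\exp((M+r)qT)\,\epsilon\exp(-rt)$ valid for all $t\ge 0$. It also makes the uniformity in time transparent through the geometric contraction factor $\exp(-rqT)$.
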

	\begin{proof}
		For small $\epsilon$, we let $y_\epsilon(t) = x_\epsilon(t)- \frac{pL}{qT}t$ be the solution of \eqref{eq:char_y} with initial condition  $a_\epsilon = a_0+\epsilon a_1$. From ODE-theory, we know that $y$ depends analytically on $\epsilon$, so we may write
		\begin{equation}
			y_\epsilon(t) = \sum_{n=0}^\infty \epsilon^n y_n(t).
		\end{equation}
		It is straight-forward to show that $y_1$ solves the problem
		\begin{equation}\label{eq:linearised_1}
			\begin{cases}\displaystyle \frac{\mathrm{d} y_1}{\mathrm{d} t} = f_x(x_0(t),t)y_1, \\ y_1(0) = a_1,
			\end{cases}
		\end{equation}
		and for $n>1$ the coefficients $y_n$ solve the problem 
		\begin{equation}\label{eq:linearised_n}
			\begin{cases}\displaystyle \frac{\mathrm{d} y_n}{\mathrm{d} t} = f_x(x_0(t),t)y_n + g_n(t), \\ y_n(0) = 0,
			\end{cases}
		\end{equation}
		for some function $g_n$ defined solely on $y_m$ for $m<n$. We note that an explicit formula for $g_n$ can be found through Fa\`a di Bruno's formula, although the result is increasingly complicated as $n$ increases. We define the antiderivative $F(t)$ of $f_x(x_0(t),t)$:
		\begin{equation}\label{eq:F}
			F(t) = \int_0^t f_x(x_0(\tau),\tau) \mathrm{d}\tau.
		\end{equation}
		Note that the integrand is periodic and $F$ scales like $F(t) = O(rt)$ as $t\to \infty$.
		
		For $n\geq 1$, we will show inductively that $y_n(t) = O\bigl(\exp(-rnt)\bigr)$ as $t\to \infty$. For $n=1$, this follows directly from \eqref{eq:linearised_1}. From the induction hypothesis and Fa\`a di Bruno's formula, it follows that $g_n(t) = O\bigl(\exp(-rnt)\bigr)$. This gives
		\begin{equation}
			y_n(t) = \exp\bigl(F(t)\bigr)\int_0^t g_n(\tau)\exp\bigl(-F(\tau)\bigr) \mathrm{d} \tau,
		\end{equation}
		so that $\bigl|y_n(t)\bigr|= O\bigl(\exp(-rnt)\bigr)$ as $t\to \infty$. Hence, the solution $y_\epsilon$ satisfies 
		\begin{equation}
			\bigl|y_\epsilon(t) - y_0(t)\bigr| = O\bigl(\epsilon\exp(-rt)\bigr),
		\end{equation}
		which proves the claim.
	\end{proof}
	In \Cref{fig:error}, we plot the convergence of the characteristics of \Cref{fig:doublewave}(c), and observe the expected exponential rate of $r$ defined in \eqref{eq:stability}. Interestingly, there may be regions at which $f_x$ is positive, which results in local spreading of the characteristic curves. Still, if the average of $f_x$ is negative, the characteristics will converge to the limit cycle.
    
    Since the first-order term $y_1(t)$ is explicit, \eqref{eq:attractive} can be improved by the following estimate,
	\begin{equation}\label{eq:attractive2}
		x_\epsilon(t) = x_0(t)+\epsilon a_1\exp\bigl(F(t)\bigr) + O\bigl(\epsilon^2\exp(-2rt)\bigr),
	\end{equation}
	as $t\to \infty$, where $F(t)$ is given by \eqref{eq:F}. Moreover, as $\epsilon\to 0$ we have that 
	\begin{equation}
		x_\epsilon(qT) = x_0(qT) + \epsilon a_1 \exp(-rqT) + O(\epsilon^2),
	\end{equation}
	For small enough $\epsilon$, the characteristics approach the limit cycles at a rate of $\alpha:= \exp(-rqT)$ across one period of $qT$.
	
	\begin{figure}
		\begin{minipage}[t]{0.48\linewidth}
			\centering
			\includegraphics[width=\linewidth]{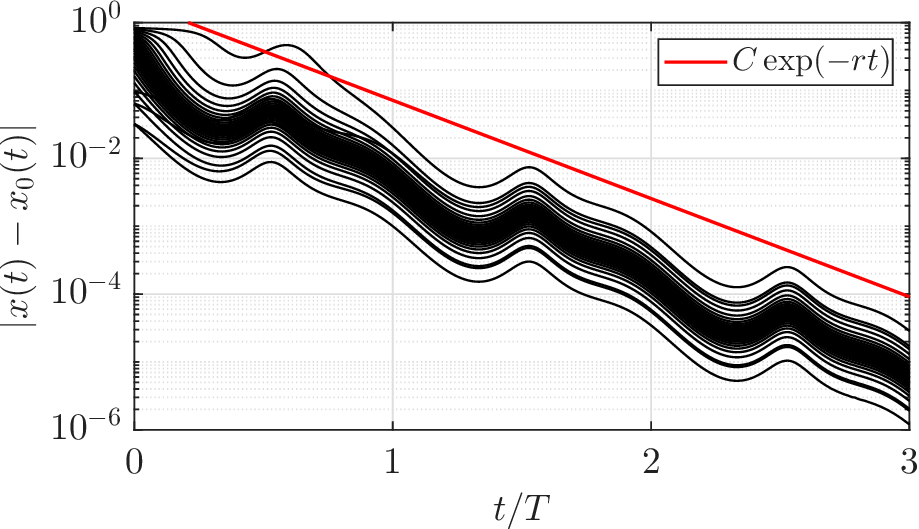}
			\caption{As $t\to \infty$, the nearby characteristics converge exponentially to an attractive limit cycle, with rate given by $r$ defined in \eqref{eq:stability}. Here, we consider the setting of \Cref{fig:doublewave}(c) and plot the difference between the characteristics $x(t)$ and the limit cycle $x_0(t)$. As seen, the characteristics converge to the limit cycle with the expected exponential rate.}\label{fig:error}
		\end{minipage}\hfill
		\begin{minipage}[t]{0.48\linewidth}
			\centering
			\includegraphics[width=\linewidth]{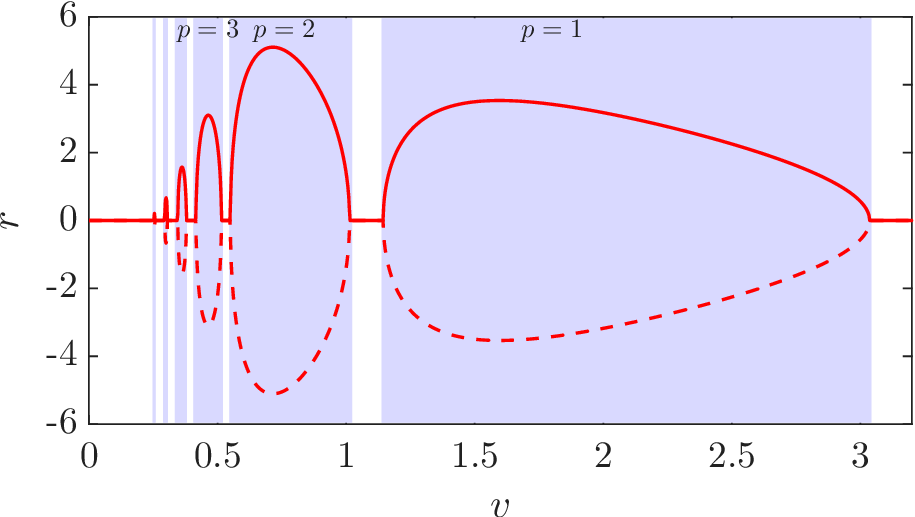}
			\caption{As we vary $v$, stable limit cycles appear and disappear for the double-travelling wave modulation. These are measured by the rate of attraction $r$, where $r > 0$ for stable limit cycles (bold line) and $r<0$ for unstable limit cycles (dashed line). Notably, as $v$ varies, the limit cycles appear at different periodicities $p$.}\label{fig:cinfty}
		\end{minipage}
		
	\end{figure}

	The proof of \Cref{prop:asymptotic} shows that any characteristic curve $x(t)$ (not necessarily periodic), will attract neighbouring characteristics at a variable rate with limiting value
	\begin{equation}\label{eq:rt}
		r_\infty = \lim_{t\to\infty}-\frac{1}{t}\int_0^t f_x(x(\tau),\tau) \mathrm{d}\tau.
	\end{equation}
	In \Cref{fig:cinfty}, we plot $r_\infty$ for the double-travelling wave modulation \eqref{eq:doublewave} over a range of values $v$. This allows us to distinguish parameter regions for which stable limit cycles appear, and further highlights the bifurcation shown in \Cref{fig:doublewave} and \Cref{fig:zoom}: the intermediate value $v=0.54$ (b) falls between the regions associated to $p=3$ (a) and $p=2$ (c).

	\subsection{Energy accumulation}
    A formal argument shows how the energy accumulation results from the convergent characteristics. Assume we have an attractive limit cycle of \eqref{eq:char} with Lyapunov coefficient $\alpha=\exp(-rqT)<1$ in units of the time-period $qT$. Across one period in $t$, we expect each derivative appearing in \eqref{eq:E} to scale like $1/\alpha$, while the support scales like $\alpha$, giving an overall scaling like
	\begin{equation}
		E(t+qT) \sim \frac{1}{\alpha}E(t).
	\end{equation}
	In other words, the scalings imply that the energy increases by a factor of $1/\alpha$ across each time period. In the next result, we make these formal arguments precise using the results of \Cref{sec:attraction}.
	\begin{prop}\label{prop:energy}
		Assume $x_0(t)$ is a $(p,q)$-periodic limit cycle of an analytic, $(L,T)$-periodic function $f(x,t)$ satisfying $x_0(0) = a_0$ such that \eqref{eq:stability} holds. At $t=0$, assume that $u(x,0) = u_0(x)$ is supported on $\Omega = a_0+[-\epsilon,\epsilon]$ for some $0<\epsilon\ll 1$. Then, for small enough $\epsilon$, 
		\begin{equation}
			E(qT) = \exp(rqT)E(0)+O\left(\epsilon\|u_0'\|_\infty^2\right).
		\end{equation}
	\end{prop}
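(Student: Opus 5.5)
We use the method of characteristics: since $u$ is constant along characteristics, we write the energy at time $qT$ as an integral over initial positions, change variables back to $t=0$, and control the Jacobian of the flow with \Cref{prop:asymptotic} and \eqref{eq:attractive2}.

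\emph{Step 1: the flow map.} Let $\Phi_t(a)=x(t;a)$ denote the solution of \eqref{eq:char} with $x(0)=a$. By \eqref{eq:rightward}, $u(\Phi_t(a),t)=u_0(a)$. Because $f>0$ is analytic, $\Phi_t$ is an analytic, increasing diffeomorphism of $\R$. Differentiating in $x$ gives
\begin{equation*}
\partial_x u(\Phi_t(a),t)=\frac{u_0'(a)}{\Phi_t'(a)}.
\end{equation*}
The support of $u(\cdot,t)$ is $\Phi_t(\Omega)$. In the impedance-matched case \eqref{eq:E2}, substituting $x=\Phi_t(a)$ yields
\begin{equation*}
E(t)=\int_{\Omega}\frac{u_0'(a)^2}{n(\Phi_t(a),t)\,\Phi_t'(a)}\,\mathrm{d}a .
\end{equation*}

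\emph{Step 2: the Jacobian.} The variational equation is $\partial_t\Phi_t'(a)=f_x(\Phi_t(a),t)\Phi_t'(a)$ with $\Phi_0'(a)=1$. Hence
\begin{equation*}
\Phi_t'(a)=\exp\Bigl(\int_0^t f_x(\Phi_\tau(a),\tau)\,\mathrm{d}\tau\Bigr).
\end{equation*}
At $a=a_0$ and $t=qT$, definition \eqref{eq:stability} gives $\Phi_{qT}'(a_0)=\exp(F(qT))=\exp(-rqT)$. This is the $\epsilon$-linear coefficient in \eqref{eq:attractive2}, so it agrees with \Cref{prop:asymptotic}.

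For $|a-a_0|\le\epsilon$ we have $\sup_{\tau\le qT}|\Phi_\tau(a)-x_0(\tau)|=O(\epsilon)$. This follows from \Cref{prop:asymptotic}, or simply from Gronwall on the compact interval $[0,qT]$. Since $f_x$ is Lipschitz (it is analytic and periodic, so bounded together with $f_{xx}$), we get
\begin{equation*}
\Phi_{qT}'(a)=\exp(-rqT)\bigl(1+O(\epsilon)\bigr)
\end{equation*}
uniformly on $\Omega$.

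\emph{Step 3: the refractive-index factor.} We use $(L,T)$-periodicity together with $x_0(qT)=x_0(0)+pL$. These give $n(\Phi_{qT}(a),qT)=n(a+pL+O(\epsilon),0)=n(a,0)+O(\epsilon)$, with the same bound on $\Omega$.

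\emph{Step 4: conclusion.} Inserting Steps 2 and 3 into the integral,
\begin{equation*}
E(qT)=\exp(rqT)\int_\Omega\frac{u_0'(a)^2}{n(a,0)}\bigl(1+O(\epsilon)\bigr)\,\mathrm{d}a=\exp(rqT)E(0)+R .
\end{equation*}
The remainder satisfies $|R|\le C\epsilon\,\exp(rqT)\int_\Omega u_0'^2/n \le C'\epsilon^2\|u_0'\|_\infty^2$, since $|\Omega|=2\epsilon$ and $n$ is bounded below. This is certainly $O(\epsilon\|u_0'\|_\infty^2)$, with the constant depending on $f$, $q$ and $T$, which is the claimed estimate.

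The main obstacle is making Step 2 uniform in $a\in\Omega$. \Cref{prop:asymptotic} is stated for a single direction $a_1$ and is asymptotic in $t$, whereas we need a bound for fixed $t=qT$ that holds uniformly over the whole interval of initial data. I would not rely on the series expansion for this. Instead I would use the explicit Liouville formula for $\Phi_t'$ and apply Gronwall on the finite interval $[0,qT]$, where all constants are harmless.
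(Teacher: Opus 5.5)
Your proposal is correct and follows essentially the same route as the paper: the flow map, the Liouville formula for the Jacobian $p'(z)=\exp\bigl(\int_0^{qT}f_x(P(z,s),s)\,\mathrm{d}s\bigr)$, the change of variables back to $t=0$, and first-order Taylor plus periodicity estimates that hold uniformly on $\Omega$. Your Gronwall argument on $[0,qT]$ for the uniformity, and your explicit remainder $O(\epsilon^2\|u_0'\|_\infty^2)$ (sharper than the stated $O(\epsilon\|u_0'\|_\infty^2)$), just make precise what the paper asserts in one line.
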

	\begin{proof}
		We define the flow map $P(z,t)$ as the solution to the characteristic equation
		\begin{equation}
			\begin{cases}
				\displaystyle \frac{\partial P}{\partial t} = f(P(z,t),t), \\
				P(z,0) = z.
			\end{cases}
		\end{equation}
		Then $P_z(z,t)$ solves the equation
		\begin{equation}\label{eq:P}
			\begin{cases}
				\displaystyle \frac{\partial P_z}{\partial t} = f_x\bigl(P(z,t),t\bigr)P_z, \\
				P_z(z,0) = 1.
			\end{cases}
		\end{equation}
		We now let $p(z)=P(z,qT)$ be the flow of $z$ across one period $qT$, then $p$ is invertible from $\Omega$ onto its image. From \eqref{eq:P}, we have 
		\begin{equation}\label{eq:p'}
			p'(z) = \exp\left(\int_0^{qT}f_x\bigl(P(z,s),s\bigr)\mathrm{d} s\right).
		\end{equation}
		We apply the substitution $x =p(z)$ to the integral defining $E$,
		\begin{align}
			E(qT) &= \frac{1}{c_0}\int_{p(a_0-\epsilon)}^{p(a_0+\epsilon)} f(x,qT)\left(\frac{\partial }{\partial x}u(x,qT)\right)^2 \mathrm{d} x \\
			&= \frac{1}{c_0}\int_{a_0-\epsilon}^{a_0+\epsilon} f(p(z),0) \left(\frac{\partial}{\partial x}u_0\left(p^{-1}(x)\right)\right)^2 p'(z) \mathrm{d} z\\
			&= \frac{1}{c_0}\int_{a_0-\epsilon}^{a_0+\epsilon} f(p(z),0)\left(u_0'(z)\right)^2 \frac{\mathrm{d} z}{p'(z)}.\label{eq:int}
		\end{align}
		This result can be estimated by a first-order Taylor approximation as $\epsilon\to 0$. From \eqref{eq:p'} we have
		\begin{equation}
			p(z) = a_0+pL +O(\epsilon), \quad f(p(z),0) = f(z,0) + O(\epsilon), \quad p'(z) = \exp(-rqT) + O(\epsilon),
		\end{equation}
		valid uniformly for $z\in \Omega$. This gives in total
		\begin{equation}
			E(qT)= \exp(rqT)\left(\frac{1}{c_0}\int_{a_0-\epsilon}^{a_0+\epsilon} f(z,0)\left(u_0'(z)\right)^2\mathrm{d} z\right)+O\left(\epsilon\|u_0'\|_\infty^2\right),
		\end{equation}
		which proves the claim.
	\end{proof}
	
	\begin{figure}
		\begin{subfigure}[t]{0.48\linewidth}
			\centering
			\includegraphics[width=0.9\linewidth]{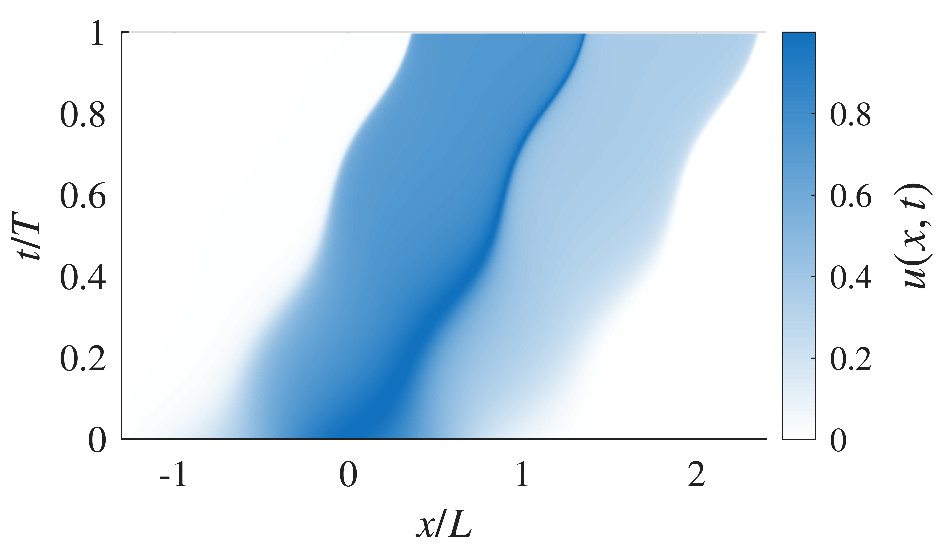}
			\caption{}
		\end{subfigure}\hfill
		\begin{subfigure}[t]{0.48\linewidth}
			\centering
			\includegraphics[width=0.9\linewidth]{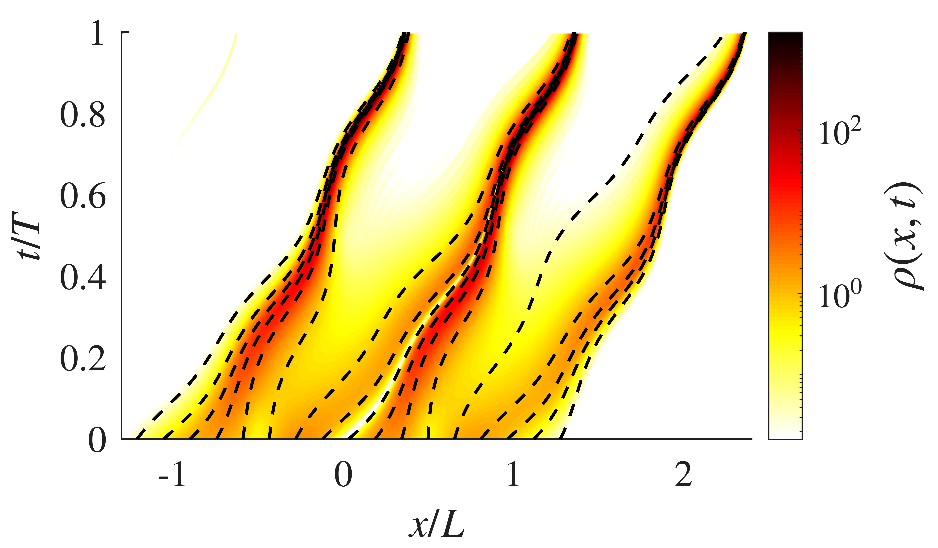}
			\caption{}
		\end{subfigure}
		\caption{The energy accumulation observed in \Cref{fig:energy_travelling} for a travelling-wave modulation is purely a consequence of the convergent characteristics. Here, we illustrate the same behaviour for the standing-wave modulation \eqref{eq:standing}. In (a), we plot the solution $u(x,t)$ when initiated by a pulse. As predicted by Proposition \ref{prop:energy}, the energy density, plotted in (b) follows the characteristics (dashed) and concentrates to the limit cycles.}\label{fig:energy}
	\end{figure}
	In \Cref{fig:energy}, we plot the solution $u(x,t)$, computed using a finite-difference discretisation, with initial conditions given by a gaussian pulse. As expected, the energy concentrates to the stable limit cycles of the underlying modulation. Here, we consider the standing-wave modulation of \Cref{fig:standing}.

	\subsection{Perturbation theory and asymptotic expansion of limit cycles}
	We now develop asymptotic expansions of limit cycles in the case of an index function $f$ which is perturbed by some magnitude  $\epsilon$.
	\begin{prop}\label{prop:asymptotic2}
		Assume $x_0(t)$ is a $(p,q)$-periodic limit cycle of an analytic, $(L,T)$-periodic function $f_0(x,t)$ such that 
		\begin{equation}\label{eq:assumption}
			r_0 = -\frac{1}{qT}\int_0^{qT} (f_0)_x(x_0(t),t)\mathrm{d} t \neq 0.\end{equation}
		Given an analytic, $(L,T)$-periodic function $f_1$ and for $0<\epsilon\ll1$, let $f_\epsilon(x,t) = f_0(x,t) + \epsilon f_1(x,t)$. Then, for small enough $\epsilon$, there is a $(p,q)$-periodic limit cycle $x_\epsilon(t)$ of $f_\epsilon(x,t)$.
	\end{prop}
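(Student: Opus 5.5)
The plan is to recast the existence of a $(p,q)$-periodic limit cycle as a zero of a scalar function and apply the implicit function theorem in $(a,\epsilon)$. For initial data $a$ near $a_0$ and small $\epsilon$, let $P_\epsilon(a,t)$ be the solution of \eqref{eq:char} with $f$ replaced by $f_\epsilon$ and $P_\epsilon(a,0)=a$. Define the displacement (Poincaré-type) map
\begin{equation*}
	G(a,\epsilon) = P_\epsilon(a,qT) - a - pL.
\end{equation*}
Because $f_\epsilon$ is $T$-periodic in $t$ and $L$-periodic in $x$, a zero of $G$ gives $x(qT)=x(0)+pL$. Uniqueness of solutions of \eqref{eq:char} then shows that $t\mapsto x(t+qT)-pL$ is again a solution, with the same initial value. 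Hence $x(t+qT)=x(t)+pL$ for all $t$, so $x$ is a $(p,q)$-periodic limit cycle of $f_\epsilon$.

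The steps are as follows. First, use standard ODE theory: $f_\epsilon$ is analytic in $(x,t,\epsilon)$, and $P_0(a_0,\cdot)=x_0$ exists on the compact interval $[0,qT]$. So $P_\epsilon(a,t)$ exists on $[0,qT]$ for $(a,\epsilon)$ in a neighbourhood of $(a_0,0)$ and depends analytically, in particular $C^1$, on $(a,\epsilon)$. Thus $G$ is $C^1$ near $(a_0,0)$, and $G(a_0,0)=0$ because $x_0$ is a limit cycle of $f_0$.

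Second, compute $\partial_a G(a_0,0)$ using the variational equation \eqref{eq:P} from the proof of Proposition~\ref{prop:energy}, exactly as in \eqref{eq:p'}:
\begin{equation*}
	\partial_a G(a_0,0) = \exp\left(\int_0^{qT} (f_0)_x(x_0(s),s)\,\mathrm{d}s\right) - 1 = \exp(-r_0 qT) - 1.
\end{equation*}
This is nonzero precisely because $r_0\neq 0$ by \eqref{eq:assumption}. The sign of $r_0$ does not matter, so the argument covers both stable and unstable cycles.

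Third, invoke the implicit function theorem (the analytic version, if an expansion in $\epsilon$ is wanted later). It yields $a(\epsilon)$, analytic near $\epsilon=0$ with $a(0)=a_0$, such that $G(a(\epsilon),\epsilon)=0$. Setting $x_\epsilon(t)=P_\epsilon(a(\epsilon),t)$ then gives the desired $(p,q)$-periodic limit cycle. As a by-product, $x_\epsilon$ is analytic in $\epsilon$, which sets up the asymptotic expansion suggested by the section title. Differentiating $G(a(\epsilon),\epsilon)=0$ gives
\begin{equation*}
	a'(0) = \frac{\partial_\epsilon P_\epsilon(a_0,qT)|_{\epsilon=0}}{1-\exp(-r_0qT)},
\end{equation*}
where $\partial_\epsilon P$ solves $\dot w = (f_0)_x(x_0,t)\,w + f_1(x_0,t)$ with $w(0)=0$.

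There is no serious obstacle here. The only delicate point is the periodicity argument: a zero of $G$ yields a genuine limit cycle on all of $\R$ only because of uniqueness of solutions together with the periodicity of $f_\epsilon$. One must also ensure that solutions do not blow up on $[0,qT]$. This holds since $f_\epsilon$ is bounded, being periodic and continuous.
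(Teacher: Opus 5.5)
Your proof is correct, but it takes a different route from the paper's.

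\textbf{The paper's route.} The paper does not use the implicit function theorem. It expands $y_\epsilon(t)=x_\epsilon(t)-tpL/(qT)$ as $\sum_n \epsilon^n y_n(t)$ and solves the linear inhomogeneous problem $\dot y_n=(f_0)_x(x_0(t),t)\,y_n+g_n(t)$ order by order. At each order it picks the initial value
$a_n=\frac{e^{F(qT)}}{1-e^{F(qT)}}\int_0^{qT}g_n e^{-F}\,\mathrm{d}\tau$
so that $y_n$ is $qT$-periodic. This is possible because $e^{F(qT)}=e^{-r_0qT}\neq 1$. Convergence of $\sum a_n\epsilon^n$ is then asserted from the bound $|a_n|\le C\int_0^{qT}|g_n|\,\mathrm{d}\tau$.

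\textbf{How the two relate.} Your nondegeneracy condition $\partial_aG(a_0,0)=e^{-r_0qT}-1\neq 0$ is exactly the paper's solvability condition. You use it once, in the implicit function theorem for the displacement map, instead of at every order.

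\textbf{What your route buys.} Existence, analyticity of $a(\epsilon)$, and local uniqueness of the continued cycle follow directly from the analytic implicit function theorem. By contrast, the paper's convergence step is only sketched. Its series $g_\epsilon$ is built from the very coefficients $a_m$ being constructed, so a genuine majorant argument would still be needed. You also spell out why $x(qT)=x(0)+pL$, together with uniqueness and the double periodicity of $f_\epsilon$, gives periodicity on all of $\R$.

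\textbf{What the paper's route buys.} It gives an explicit recursion for every $y_n$ and $a_n$, and this is what produces the first-order formula \eqref{eq:y1}. Your $a'(0)$ reproduces the paper's $a_1$, since $g_1(t)=f_1(x_0(t),t)$ and $F(qT)=-r_0qT$. Moreover, the $\epsilon$-Taylor coefficients of your analytic family $x_\epsilon$ are all $qT$-periodic. The periodic choice of $a_n$ is unique at each order. Hence the Taylor coefficients of $a(\epsilon)$ coincide with the paper's $a_n$ at all orders, so your argument also justifies the paper's formal expansion after the fact.
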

	\begin{proof}
		We consider the problem
		\begin{equation}
			\begin{cases}\displaystyle \frac{\mathrm{d} x_\epsilon}{\mathrm{d} t} = f_\epsilon(x_\epsilon,t), \\ x_\epsilon(0) = a_\epsilon.
			\end{cases}
		\end{equation}
		We will choose some initial condition $a_\epsilon$, depending analytically on $\epsilon$, so that corresponding solution $x_\epsilon(t)$ will be a limit cycle. We let $y_\epsilon(t) = x_\epsilon(t) - t\frac{pL}{qT}$. Following similar arguments as in the proof of \Cref{prop:asymptotic}, $y_\epsilon$ can be written 
		\begin{equation}\label{eq:y}
			y_\epsilon(t) = \sum_{n=0}^\infty \epsilon^n y_n(t).
		\end{equation}
		The leading coefficient $y_0(t)$ satisfies the limiting problem
		\begin{equation}
			\begin{cases}\displaystyle \frac{\mathrm{d} y_0}{\mathrm{d} t} = f_0(x_0(t),t) - \frac{pL}{qT}, \\ y_0(0) = a_0,
			\end{cases}
		\end{equation}
		and we choose $a_0$ so that the limiting problem has a $qT$-periodic solution $y_0(t)$. For $n>0$, the coefficient $y_n(t)$ satisfies the linear differential equation
		\begin{equation}\label{eq:linearised}
			\begin{cases}\displaystyle \frac{\mathrm{d} y_n}{\mathrm{d} t} = (f_0)_x(x_0(t),t)y_n + g_n(t), \\ y_n(0) = a_n,
			\end{cases}
		\end{equation}
		for some function $g_n$. Here, $g_n(t)$ is defined solely in terms of $y_m(t)$ for $m<n$, and is a $qT$-periodic function whenever the functions $y_m(t)$ are. Moreover, $g_n(t)$ define a power series 
		\begin{equation}
			g_\epsilon(t) = \sum_{n=0}^\infty g_n(t)\epsilon^n
		\end{equation}
		with nonzero radius of convergence. We define the antiderivative $F(t)$ of $(f_0)_x(x_0(t),t)$:
		\begin{equation}
			F(t) = \int_0^t (f_0)_x(x_0(\tau),\tau) \mathrm{d}\tau.
		\end{equation}
		The solution to \eqref{eq:linearised} is then given by
		\begin{equation}
			y_n(t) = a_n\exp(F(t)) + \exp(F(t))\int_0^t g_n(\tau)\exp(-F(\tau)) \mathrm{d} \tau.
		\end{equation}
		Note that this is a $qT$-periodic function when $a_n$ is given by 
		\begin{equation}\label{eq:a}
			a_n = \frac{\exp(F(qT))}{1-\exp(F(qT))}\int_0^{qT} g_n(\tau)\exp(-F(\tau)) \mathrm{d} \tau.
		\end{equation}
		Note that $a_n$ satisfies
		\begin{equation}
			|a_n| < C\int_0^{qT} |g_n(\tau)| \mathrm{d} \tau
		\end{equation}
		for some constant $C$ independent of $n$. It follows that the coefficients $a_n$, defined inductively through \eqref{eq:a}, define an analytic function $a_\epsilon$:
		\begin{equation}
			a_\epsilon = \sum_{n=0}^\infty a_n \epsilon^n,
		\end{equation}
		for $\epsilon$ within some nonzero radius of convergence. This way, \eqref{eq:y} defines a solution $y_\epsilon(t)$ such that each order $y_n(t)$ is $qT$-periodic. It follows that $y_\epsilon(t)$ is also $qT$-periodic. 
	\end{proof}
	The key feature of the proof of \Cref{prop:asymptotic2} is that each order of perturbation admits a solution $y_n(t)$ with the same periodicity as the leading order limit cycle. We also emphasise that the main assumption \eqref{eq:assumption} does not depend on the perturbation $f_1$. If this condition holds, the limit cycle is robust and will persist under any small perturbation which respects the periodicity of $f_0$. Moreover, for each order of perturbation, the condition for stability with respect to initial conditions agrees with that of \eqref{eq:assumption}.

	\begin{figure}
		\centering
		\includegraphics[width=0.5\linewidth]{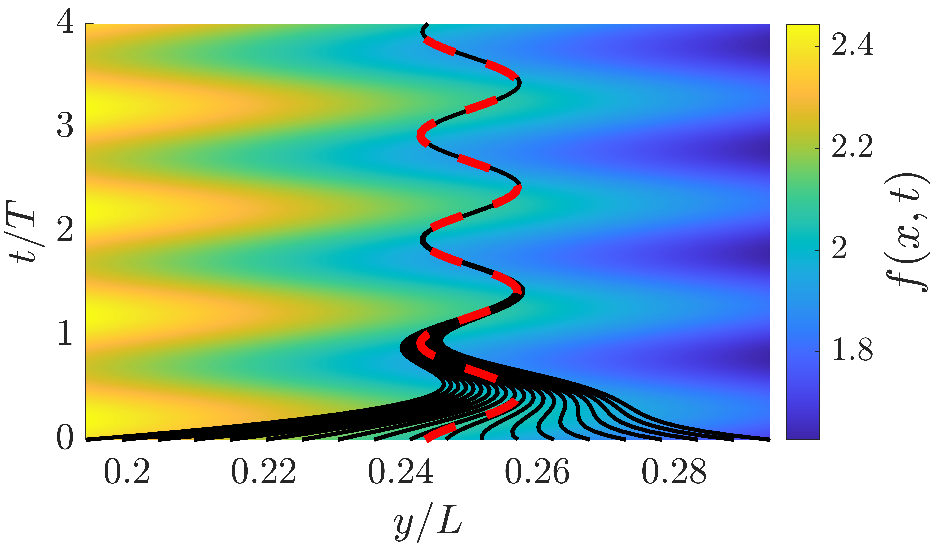}
		\caption{As long as $r\neq 0$, the limit cycles are robust against perturbations. Here, we compare numerically computed characteristics against the first-order asymptotic approximation of the perturbed limit cycle given by \eqref{eq:y1}.}\label{fig:pert}
	\end{figure}
	
	Next, we exemplify the perturbation theory by computing the solution in the case of a perturbation of a travelling wave:
	\begin{equation}
		f(x,t) = f_0(x-vt) + \epsilon f_1(x,t),
	\end{equation}
	for some $0<\epsilon\ll 1$. Transforming to a moving coordinate frame $y = x-vt$ gives
	\begin{equation}
		\frac{\mathrm{d} y}{\mathrm{d}t} = f_0(y) - v + \epsilon f_1(y+vt,t). 
	\end{equation}
	We consider perturbation theory analogously to \Cref{prop:asymptotic} up to order $\epsilon$, which gives 
	\begin{equation}
		\frac{\mathrm{d} y_0}{\mathrm{d}t} = f_0(y_0) - v,
	\end{equation}
	and 
	\begin{equation}
		\frac{\mathrm{d} y_1}{\mathrm{d}t} = y_1(t)f_0'(y_0) + f_1(y_0+vt,t). 
	\end{equation}
	The leading-order limit cycle is then given by a constant $y_0(t) = y_0$ such that $f_0(y_0) = v$. The next order is given by
	\begin{equation}\label{eq:y1}
		y_1(t) = a_1\exp(f_0'(y_0)t) + \exp(f_0'(y_0)t)\int_0^t f_1(y_0+v\tau,\tau)\exp(-f_0'(y_0)\tau)\mathrm{d} \tau.
	\end{equation}
	The condition \eqref{eq:assumption} is now given by $f_0'(y_0)\neq 0$. In this case, regardless of the choice of $f_1$, we can find a constant $a_1$ which gives a $T$-periodic function $y_1$ in \eqref{eq:y1}:
	\begin{equation}
		a_1 = \frac{\exp(f_0'(y_0)T)}{1-\exp(f_0'(y_0)T)}\int_0^T f_1(y_0+vt,t)\exp\bigl(-f_0'(y_0)t\bigr)\mathrm{d} t.
	\end{equation}
	The solution $y_1$ is illustrated in a concrete example in \Cref{fig:pert}. Here, we consider the double-travelling wave modulation as a perturbation of a single travelling wave: $f_0(x,t) =  2 + \cos\bigl(\frac{2\pi}{L}(x-vt)\bigr)$ while $f_1(x,t) = \cos\bigl(\frac{2\pi}{L}(x-2vt)\bigr)$.
	
	\subsection{Stability to non-periodic perturbations} \label{sec:nonperiodic}
	To further develop the perturbation theory of attractive limit cycles, we now consider stability with respect to non-periodic perturbations.	We assume $x_0(t)$ is a $(p,q)$-periodic limit cycle of the unperturbed problem
	\begin{equation}
		\begin{cases}\displaystyle \frac{\mathrm{d} x_0}{\mathrm{d} t} = f_0(x_0,t), \\ x_0(0) = a_0,
		\end{cases}
	\end{equation}
	and assume that  		
	\begin{equation}
		r_0 = -\frac{1}{qT}\int_0^{qT} (f_0)_x(x_0(t),t)\mathrm{d} t >0.
	\end{equation}
	We then consider a perturbed index function $f_\epsilon(x,t) = f_0(x,t) + \epsilon f_1(x,t)$ for some general, non-periodic function $f_1$ and consider the problem
	\begin{equation}
		\begin{cases}\displaystyle \frac{\mathrm{d} x_\epsilon}{\mathrm{d} t} = f_\epsilon(x_\epsilon,t), \\ x_\epsilon(0) = a_0.
		\end{cases}
	\end{equation}
	Since $f_1$ is non-periodic, we no longer expect a periodic limit cycle $x_\epsilon$, but we instead consider the asymptotic rate of attraction defined in \eqref{eq:rt}:
	\begin{equation} \label{eq:r_eps}
		r_\epsilon = -\lim_{t\to \infty}\frac{1}{t}\int_0^t (f_\epsilon)_x(x_\epsilon(\tau),\tau)\mathrm{d}\tau.
	\end{equation}
	If $r_\epsilon>0$, nearby characteristics will approach $x_\epsilon$ at exponential rate $r_\epsilon$. What we need to prove is that given $x_0$ is an attractive limit cycle, then $r_\epsilon>0$ for small enough $\epsilon$.
	
	Assume $(f_0)_x$ is uniformly Lipschitz and $(f_1)_x$ is bounded: $\bigl|(f_0)(x_1,t)-(f_0)(x_2,t)\bigr|\leq L|x_1-x_2|$ and $\bigl|(f_1)_x(x,t) \bigr| \leq M$ for all $x$ and $t$. Then 
	\begin{align}
		|r_\epsilon-r_0| &\leq \lim_{t\to \infty}\frac{1}{t}\int_0^t \bigl|(f_0)_x(x_\epsilon(\tau),\tau)-(f_0)_x(x_0(\tau),\tau)\bigr|\mathrm{d}\tau + \epsilon \lim_{t\to \infty}\frac{1}{t}\int_0^t \bigl|(f_1)_x(x_\epsilon(\tau),\tau)\bigr|\mathrm{d}\tau\\
		&\leq \lim_{t\to \infty}\frac{1}{t}\int_0^t L \bigl|x_\epsilon(\tau)-x_0(\tau)\bigr|\mathrm{d}\tau + \epsilon M.\label{eq:rdiff}
	\end{align}	
	Using the stability criterion $r_0>0$, we can follow the proof of \Cref{prop:asymptotic2} to show that as $\epsilon\to 0$ we have $\bigl|x_\epsilon(\tau)-x_0(\tau)\bigr|\leq \epsilon C$ for some constant $C$ independent of $t$.	Now, from \eqref{eq:rdiff} it follows that 
	\begin{equation}
		|r_\epsilon-r_0| \leq \epsilon CL + \epsilon M,
	\end{equation}
	so that, for small enough $\epsilon$ we have $r_\infty>0$, which shows that the characteristics remain attractive even in the case of a non-periodic perturbation.

    \subsection{Stability to random perturbations}

    Finally, to test the limits of the insight gained from our perturbative analyses, we can briefly consider adding some random noise to the modulation function $f$. For example, suppose the characteristic ODE $\mathrm{d}x/\mathrm{d}t=f(x,t)$ in \eqref{eq:char} is replaced by the SDE
	\begin{equation}
		\mathrm{d}X_t=f(X_t,t)\mathrm{d}t+\sigma \mathrm{d}W_t
	\end{equation}
	where $W_t$ is a standard Brownian motion and $\sigma>0$. This term describes stochastic perturbations to the characteristics due to random, spatially homogeneous kicks in the position variable. Let $p(x,t)$ be the probability distribution of $X_t$. This satisfies the Fokker–Planck equation \cite{pavliotis2014stochastic}
	\begin{equation}\label{eq:FP}
		\frac{\partial p}{\partial t} =-\frac{\partial }{\partial x} (f(x,t)p)+\frac{\sigma^2}{2}\frac{\partial^2 p}{\partial x^2 }.
	\end{equation}

    \begin{figure}
		\begin{subfigure}[t]{0.4\linewidth}
			\centering
			\includegraphics[height=3.75cm]{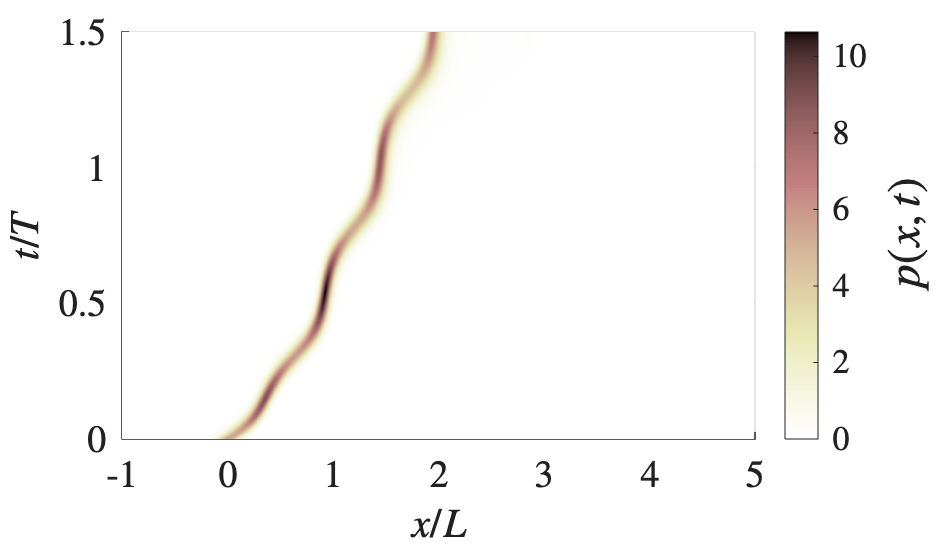}
			\caption{$\sigma^2=0.01$}
		\end{subfigure}\hfill
		\begin{subfigure}[t]{0.4\linewidth}
			\centering
			\includegraphics[height=3.75cm]{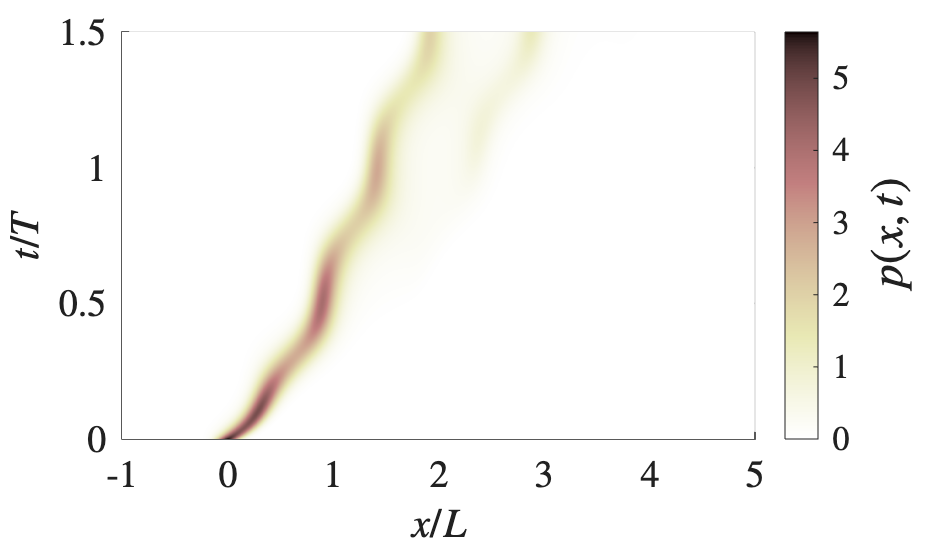}
			\caption{$\sigma^2=0.1$}
		\end{subfigure}
        \hfill
		\begin{subfigure}[t]{0.14\linewidth}
			\centering
			\includegraphics[height=3.75cm,trim={0 -0.15cm 0 0},clip]{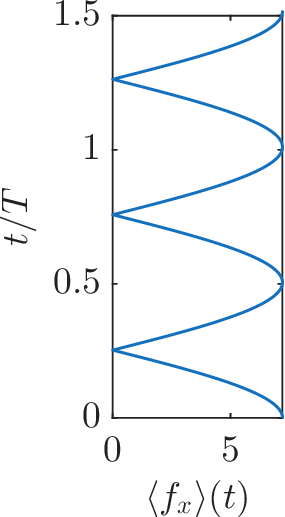}
			\caption{$\langle f_x \rangle(t)$} \label{fig:fx1}
		\end{subfigure}

\vspace{0.5cm}

        \begin{subfigure}[t]{0.4\linewidth}
			\centering
			\includegraphics[height=3.75cm]{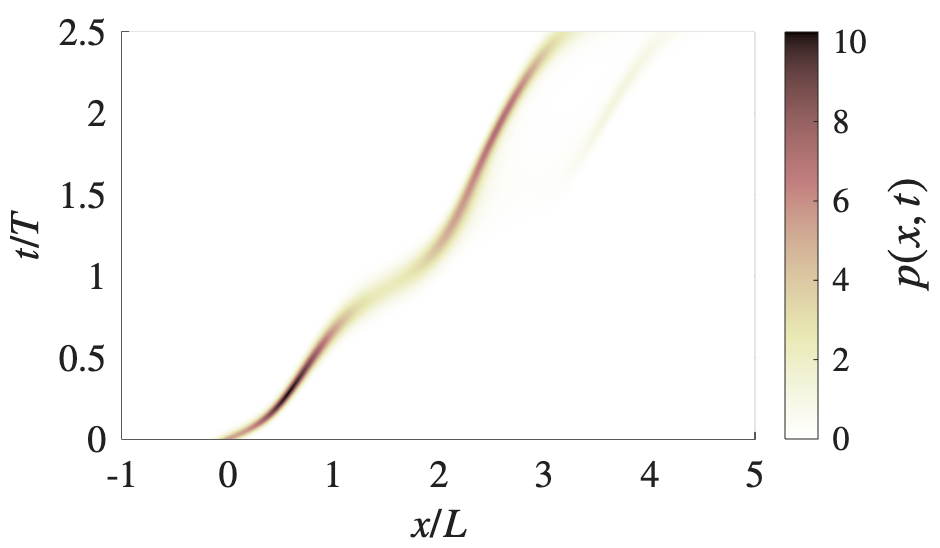}
			\caption{$\sigma^2=0.01$}
		\end{subfigure}\hfill
		\begin{subfigure}[t]{0.4\linewidth}
			\centering
			\includegraphics[height=3.75cm]{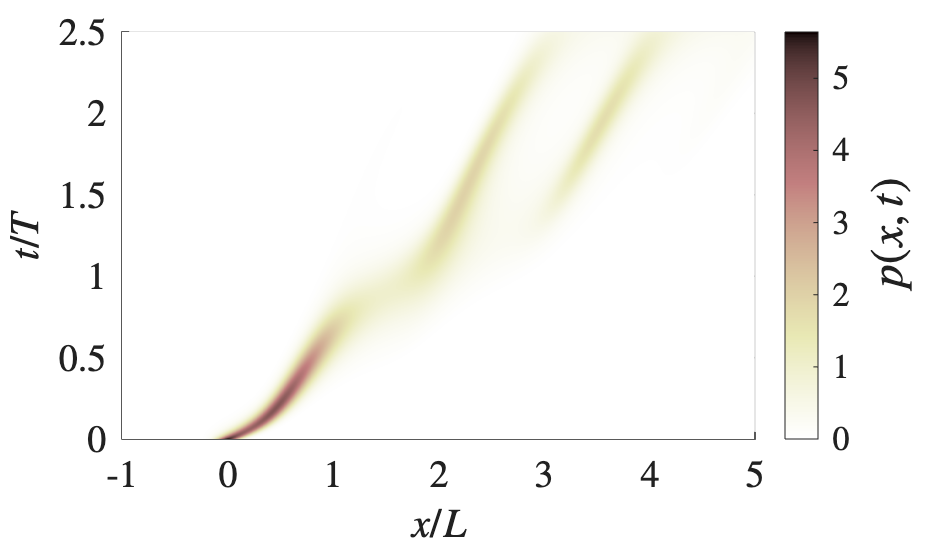}
			\caption{$\sigma^2=0.1$}
		\end{subfigure}
        \hfill
		\begin{subfigure}[t]{0.14\linewidth}
			\centering
			\includegraphics[height=3.75cm,trim={0 -0.15cm 0 0},clip]{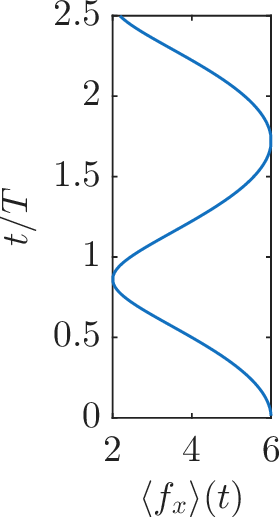}
			\caption{$\langle f_x \rangle(t)$} \label{fig:fx2}
		\end{subfigure}
		\caption{Solving Fokker-Planck for constant diffusion $\sigma$. The top row (a-c) corresponds to the standing wave modulation \eqref{eq:standing} and the bottom row (d-f) corresponds to the two travelling wave modulations \eqref{eq:doublewave}. In both cases, we show solutions $p(x,t)$ for two different values of the variance $\sigma^2$, as well as the evolution of the spatial average of the magnitude of the derivative $f_x$.}\label{fig:FokkerP}
	\end{figure}

    We can solve the Fokker-Planck equation \eqref{eq:FP} to understand how characteristic curves are likely to behave when randomness is added to the model. In \Cref{fig:FokkerP} we show solutions for two different values of the variance $\sigma^2$, which controls the diffusion in \eqref{eq:FP}. In both cases, the initial distributions are taken as very sharp Gaussians centred at zero (in particular, $p(x,0)=10/\sqrt{\pi} \exp(-100 x^2)$). We can see that the larger $\sigma$ is, the more likely it is a characteristic manages to escape to a nearby limit cycle. 

    The behaviour in \Cref{fig:FokkerP} is governed by the competition between the underlying attractive limit cycles and the diffusive term in \eqref{eq:FP}. From our previous perturbative analysis, we expect that the key quantity that governs the rate of attraction to the limit cycle is $f_x$. In \Cref{fig:fx1} and \Cref{fig:fx2} we show the evolution of the spatial average of the magnitude of $f_x$:
    \begin{equation}
        \langle f_x \rangle(t) := \frac{1}{L} \int_0^L |f_x(x,t)| \mathrm{d}x.
    \end{equation}
    This is a crude, back-of-the-envelope estimate for the global rate of attraction at the time $t$. Nevertheless, we see that this is able to predict the features of the solutions $p(x,t)$, such as how the extent to which $p(x,t)$ is concentrated along the limit cycle fluctuates periodically. The moment when $p(x,t)$ is least strongly confined near to the limit cycle, meaning characteristics are most likely to escape, are precisely the local minima of $\langle f_x \rangle(t)$.

	\section{Floquet–Fourier computational method}\label{sec:floquet}
	We next introduce the Floquet–Fourier numerical method used for computing limit cycles in \Cref{sec:main}. Related methods, using Fourier-series expansion of solutions to periodic differential equations, are well-established \cite{deconinck2006computing}, and we phrase it here for limit cycles of the space–time-periodic characteristic equation. The key step is to define the variable $z(t)$ as 
	\begin{equation}
		z(t) = \exp{\frac{2\pi \i x(t)}{L}}.
	\end{equation}
    We let $g(z,t)$ be the index function $f(x,t)$ as a function of $z$ instead of $x$:
    \begin{equation}
        g(z,t) = f\bigl(\tfrac{L}{2\pi\i}\log(z),t\bigr),
    \end{equation}
    for $z$ on the complex unit circle, which is independent of the branch of $\log$ due to the periodicity of $f(x,t)$. From \eqref{eq:char}, we find that $z$ solves the problem
	\begin{equation}
		\begin{cases}
			\displaystyle \frac{\mathrm{d} z}{\mathrm{d}t} = \frac{2\pi\i}{L}g(z,t)z(t), \\
			z(0) = z_0,
		\end{cases}
	\end{equation}
	where $z_0 = \exp{\frac{2\pi \i a}{L}}$. Note that $z(t)$ now satisfies $z(t+qT) = z(t)$, in other words, $z$ is a $qT$-periodic function. We may then expand $z(t)$ in a Fourier series
	\begin{equation}
		z(t) = \sum_{n=-\infty}^\infty z_n\exp\frac{2\pi\i n t}{qT}.
	\end{equation}
	Similarly, we may expand the $(L,T)$-periodic function $f(x,t)$ in a two-dimensional Fourier series as
	\begin{equation}
		f(x,t) = \sum_{k,\ell\in \Z} f_{k\ell} \exp{\frac{2\pi \i k x}{L}} \exp{\frac{2\pi \i \ell t}{T}}.
	\end{equation}
	Substituting the expression for $z$, we then have 
	\begin{equation}
		g(z,t) = \sum_{k,\ell\in \Z} f_{k\ell} z^k(t) \exp{\frac{2\pi \i \ell t}{L}}.
	\end{equation}
	Here, $z^k(t)$ may be written
	\begin{equation}
		z^k(t) = \sum_{n=-\infty}^\infty (z^{k,*})_n\exp\frac{2\pi\i n t}{qT},
	\end{equation}
	where we use the notation $(z^{k,*})_m$ for the $k$-fold convolution of $z_n$ by itself. In total, we have
	\begin{equation}
		\sum_{n=-\infty}^\infty \frac{2\pi\i n}{qT} z_n\exp\frac{2\pi\i n t}{qT} = \frac{2\pi \i}{L}\sum_{k,\ell,n\in \Z} f_{k\ell}(z^{k+1,*})_\ell\exp\frac{2\pi\i (n + q\ell) t}{qT}.
	\end{equation}
	Simplifying, we obtain the nonlinear system of equations 
	\begin{equation}\label{eq:floquet_cycle}
		z_n = \frac{q}{vn}\sum_{k,\ell\in \Z}f_{k\ell} (z^{k+1,*})_{n-q\ell},
	\end{equation}
	for every $n \in \Z\setminus\{0\}$. A limit cycle is then specified by an integer $q\in\Z$ for which there is a nontrivial sequence $(z_n)$ satisfying \eqref{eq:floquet_cycle}. We can directly predict the rate $r$ of attraction of the limit cycles from \eqref{eq:stability}, which gives
	\begin{equation}
		r = - \frac{2\pi\i}{L}\sum_{k,\ell\in \Z}kf_{k\ell}  (z^{k,*})_{-\ell}.
	\end{equation}
	
	If $f(x,t)$ is smooth, the coefficients $f_{mn}$ will decay rapidly and we truncate the system in \eqref{eq:floquet_cycle} to finite $-M\leq \ell\leq M$. We also note that, since $(z^{-m,*})_n$ are the Fourier coefficients of $z^{-m}(t)$, it follows that 
	\begin{equation}
		(z^{-m,*})_n = \overline{(z^{m,*})_{-n}},
	\end{equation}
	so convolutions of negative order are not required for numerical implementation of \eqref{eq:floquet_cycle} and, after truncating the series, \eqref{eq:floquet_cycle} is a system of polynomial equations in $z_n$ and $\overline{z_n}$. This system can be solved using iterative Newton-like methods, and the \texttt{MATLAB}-function \texttt{fsolve} was used to find the limit cycles in \Cref{sec:main}. As initial guess for the iterative solution method, we use a straight line $x(t) = x_0 + vt$.
	
	Since the Floquet–Fourier method does not rely on a direct integration of the ODE, it is possible to compute both stable and unstable limit cycles. By contrast, a forward integration of the ODE will only yield the stable limit cycles. 
	
	\section{Full-wave simulations}
	We now return to the full wave equation \eqref{eq:wave} and study the energy accumulation using finite-difference discretisations. The energy is computed by a symmetric finite-difference truncation of \eqref{eq:E}. This way, we are able to compare the behaviour of impedance-matched systems (studied in previous sections) to general systems.
	
	In \Cref{fig:fullwave}, we consider a travelling-wave modulation, and compare an impedance-matched system with a non-magnetic system where $\mu(x,t)$ is held constant and only $\epsilon(x,t)$ is modulated. Here, we take $f(x,t) = 2+\cos\left(\frac{2\pi}{L}(x-vt)\right)$ and $\epsilon(x,t) = c_0/f(x,t)$. In the impedance-matched case, we take $\mu(x,t) = \epsilon(x,t)$, while in the non-magnetic case we take $\mu(x,t) = c_0/2$. As initial conditions, we take a rightward-propagating Gaussian pulse $u(x,0) = \exp(-x^2/0.4L)$.  In both cases, the resulting wave field is mainly propagating to the right. Although the non-magnetic case is not covered by the analysis of \Cref{sec:main}, we observe energy accumulation in a similar way to the impedance-matched case. Here, the main difference is a lower rate of energy accumulation in the non-magnetic system.
	
	\begin{figure}[tbh]
		\begin{subfigure}[t]{0.48\linewidth}
			\centering
			\includegraphics[width=0.9\linewidth]{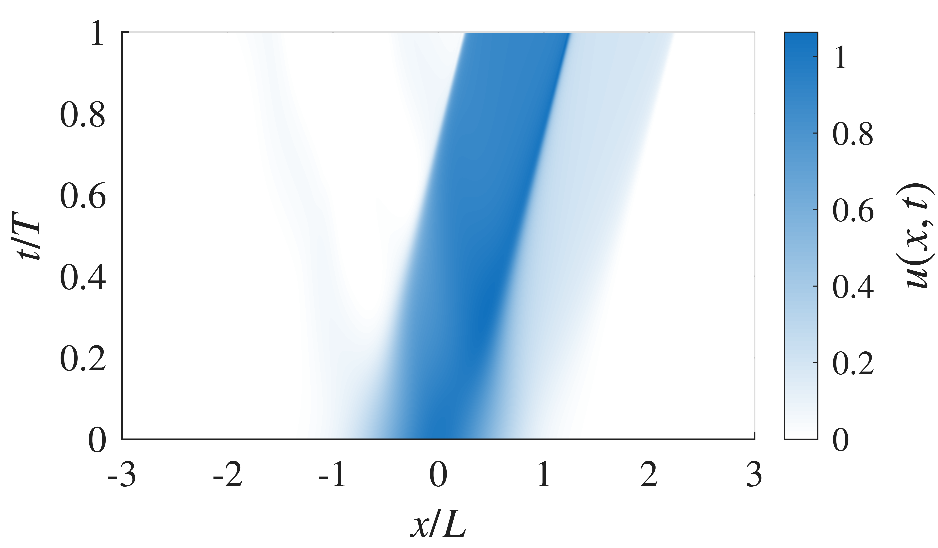}
			\caption{Solution with impedance-matched modulation.}
		\end{subfigure}\hfill
		\begin{subfigure}[t]{0.48\linewidth}
			\centering
			\includegraphics[width=0.9\linewidth]{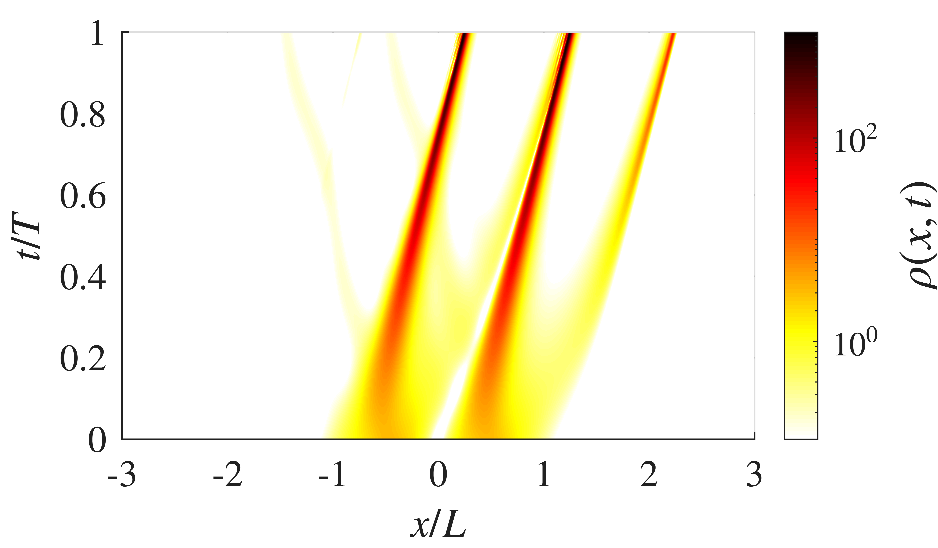}
			\caption{Energy density with impedance-matched modulation.}
		\end{subfigure}
		\begin{subfigure}[t]{0.48\linewidth}
			\centering
			\includegraphics[width=0.9\linewidth]{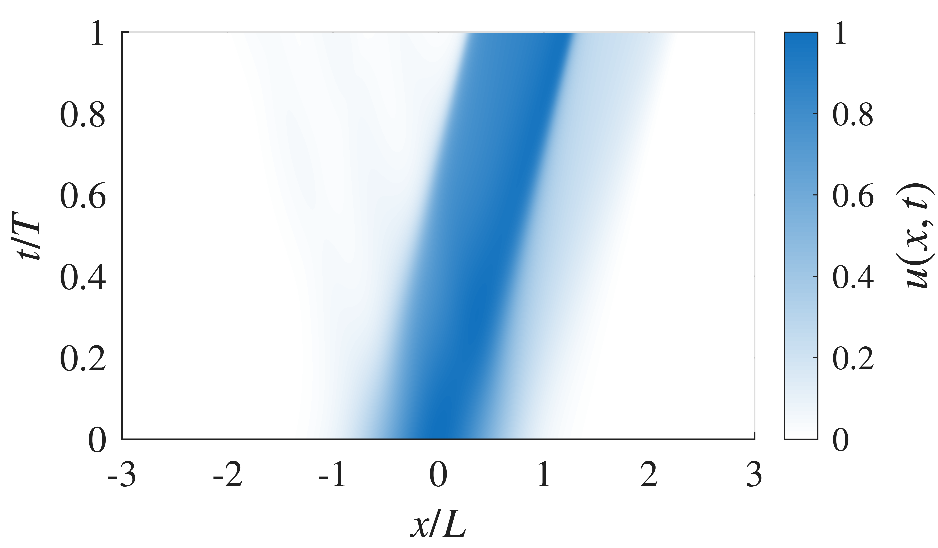}
			\caption{Solution with non-magnetic modulation.}
		\end{subfigure}\hfill
		\begin{subfigure}[t]{0.48\linewidth}
			\centering
			\includegraphics[width=0.9\linewidth]{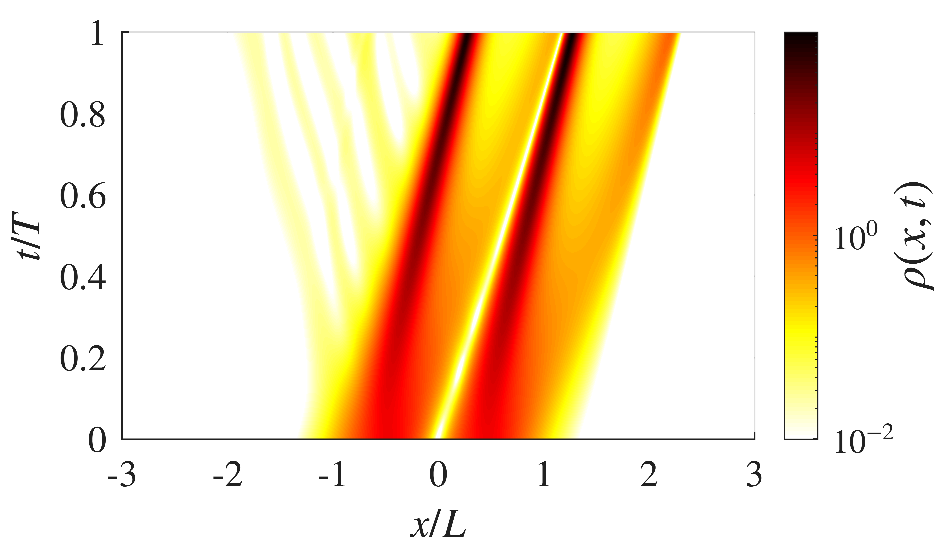}
			\caption{Energy density with non-magnetic modulation.}
		\end{subfigure}
		\caption{We consider the wave field $u$ and the energy density $\rho$ of the full wave equation \eqref{eq:wave} when initialized by a rightward-propagating Gaussian pulse. In panels (a) and (b), we consider an impedance-matched system, and observe energy accumulation along the limit cycles as predicted by \Cref{prop:energy}. In panels (c) and (d), we consider a non-magnetic system where $\epsilon(x,t) = c_0/f(x,t)$ and $\mu(x,t)= c_0/f_0$ for constant $f_0$. This system shows similar features to (a)--(b), although the rate of energy accumulation is decreased.}\label{fig:fullwave}
	\end{figure}
	
    \section{Concluding remarks}
    We have shown that wave-energy accumulation in smooth space–time-varying media is governed by stable limit cycles. Neighbouring characteristics converge exponentially, causing the wave energy to concentrate and grow at a rate determined by the corresponding Lyapunov exponent. We developed analytical estimates for this rate, established robustness under small perturbations, and introduced a Floquet–Fourier method for computing both stable and unstable cycles. Full-wave simulations confirm that the same mechanism remains relevant beyond the impedance-matched setting.

    These results provide a common dynamical framework that unifies the widely studied modulation patterns, such as travelling-wave \cite{horsley2023quantum}, standing-wave \cite{horsley2024traveling} and, with a few modifications, rectangular \cite{lurie2006wave} modulation. It allows the key energy-accumulation phenomena to be generalised to more exotic periodic modulation patterns, such as the double-travelling-wave modulation considered as a demonstrative example here. Our analysis shows that the existence of attractors for characteristics is the key underlying mechanism for energy accumulation. This suggests natural extensions to almost-periodic \cite{koufidis2024enhanced}, quasiperiodic \cite{ruchiev2026energy} and randomly \cite{carminati2021universal} modulated media, where limit cycles may be replaced by more general attracting trajectories or invariant sets while energy accumulation persists. Under suitable assumptions, we expect that some of these examples could be handled using the non-periodic perturbation theory developed in \Cref{sec:nonperiodic}.
    
    \section{Acknowledgements}

    BD was funded by the Leverhulme Trust through a Research Leadership Award.
	
	\section{Competing interests}
	
	The authors have no competing interests to declare.
	
	\section{Data availability}
	
	The software developed for this study is available at \url{https://doi.org/10.5281/zenodo.23317726}.
	
	\bibliographystyle{abbrv}
	\bibliography{references}{}
\end{document}